\newtheorem{theorem}{Theorem}[section]
\newtheorem{lemma}[theorem]{Lemma}
\newtheorem{proposition}[theorem]{Proposition}
\newtheorem{corollary}[theorem]{Corollary}
\theoremstyle{definition}
\newtheorem{definition}[theorem]{Definition}
\newtheorem{question}[theorem]{Question}
\newtheorem{remark}[theorem]{Remark}
\newcommand{\RR}{{\mathbf R}}
\newcommand{\QQ}{{\mathbf Q}}
\newcommand{\ZZ}{{\mathbf Z}}
\newcommand{\Hyp}{{\mathbb{H}^3}}
\newcommand{\Euc}{{\mathbb{E}^3}}
\newcommand{\Sph}{{\mathbb{S}^3}}
\newcommand{\HypEuc}{{\mathbb{H}^2\times\mathbb{E}^1}}
\newcommand{\SphEuc}{{\mathbb{S}^2\times\mathbb{E}^1}}
\newcommand{\Sft}{{\widetilde{\mathrm{SL}}_2}}
\newcommand{\Sol}{{\mathrm{Sol}}}
\newcommand{\Nil}{{\mathrm{Nil}}}
\newcommand{\vrtx}{{\mathrm{Ver}}}
\newcommand{\edge}{{\mathrm{Edg}}}
\newcommand{\SL}{{\mathrm{SL}}}
\newcommand{\Hom}{{\mathrm{Hom}}}
\newcommand{\Tor}{{\mathrm{Tors}}}
\newcommand{\Mod}{{\mathrm{Mod}}}
\newcommand{\Area}{{\mathrm{Area}}}
\newcommand{\qf}{{\mathfrak{q}}}
\newcommand{\distortion}{{\mathscr{D}}}
\newcommand{\geo}{{\mathtt{geo}}}
\newcommand{\tori}{{\mathcal{T}}}
\newcommand{\pieces}{{\mathcal{J}}}
\title[Finiteness of nonzero degree maps]{Finiteness of nonzero degree maps\\ between three-manifolds}
\author[Yi Liu]{Yi Liu} 
\address{%
	Beijing International Center for Mathematical Research\\
	Peking University\\
	Beijing 100871\\
	China} 
\email{% 
  liuyi@bicmr.pku.edu.cn}  
\thanks{Partially supported by the Recruitment Program of Global Youth Experts of China}
\subjclass[2010]{57M}
\date{\today}
\begin{document}

\begin{abstract}
In this paper, it is shown 
that every orientable closed 3-manifold 
maps with nonzero degree onto at most finitely many homeomorphically
distinct irreducible non-geometric orientable closed 3-manifolds.
Moreover, given any nonzero integer,
as a mapping degree up to sign,
every orientable closed 3-manifold maps with that degree onto
only finitely many homeomorphically distinct 
orientable closed 3-manifolds.
\end{abstract}

\maketitle

%\tableofcontents

\section{Introduction}\label{Sec-Intro}

Let $M$ and $N$ be two orientable closed $3$--manifolds. 
For any nonzero integer $d$,
we say that $M$ \emph{$d$--dominates} $N$ if
there exists a map $f\colon M\to N$ 
of degree $\pm d$ up to sign. 
We say that $M$ \emph{dominates} $N$
if $M$ $d$--dominates $N$ for some nonzero integer $d$.

In this paper, we prove the following result:

\begin{theorem}\label{main-dominate} 
Every orientable closed $3$--manifold dominates at most finitely many homeomorphically
distinct irreducible non-geometric $3$--manifolds.\end{theorem}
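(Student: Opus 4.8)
\emph{Proof idea.} The plan is to reduce the statement to a finiteness assertion about the JSJ decomposition of the dominated manifold, and then to bound that data in terms of the dominating manifold $M$, using Gromov's simplicial volume for the hyperbolic pieces and a direct slope-and-gluing analysis for the graph part. By Perelman's geometrization, an orientable closed irreducible non-geometric $3$-manifold $N$ has a nontrivial JSJ decomposition along a canonical nonempty system $\tori$ of incompressible tori into pieces $\pieces$, each either a cusped hyperbolic manifold or a Seifert fibered space over a hyperbolic $2$-orbifold; and only finitely many orientable closed $3$-manifolds realize a prescribed bounded amount of such data --- boundedly many pieces, each of bounded topological type, glued along bounded isotopy classes in $\mathrm{GL}_2(\ZZ)$. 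So it suffices to bound all of this in terms of $M$. We may assume $M$ itself is irreducible with nontrivial JSJ decomposition: a degree-$d$ map from a reducible $M$ to the irreducible $N$ compresses along the reducing spheres, exhibiting some prime summand of $M$ as dominating $N$, while a closed geometric $M$ dominates no irreducible non-geometric manifold (a classical fact, from simplicial volume together with the structure of its fundamental group).

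First I would handle the hyperbolic pieces and, when any are present, the degree. A degree-$d$ map $f\colon M\to N$ satisfies $\norm{M}\ge d\,\norm{N}$, and by Gromov and Thurston $\norm{N}=v_3^{-1}\sum_i\Vol(\mathcal N_i^{\mathrm{hyp}})$ summed over the hyperbolic JSJ pieces, $v_3$ the volume of the regular ideal tetrahedron. Hence those pieces have total volume at most $v_3\norm{M}$, so by J{\o}rgensen--Thurston finiteness they form one of finitely many collections of cusped orientable hyperbolic manifolds, and in particular their number is bounded. Since the volume of a cusped orientable hyperbolic $3$-manifold is bounded below by a positive constant, there is a gap $c_0>0$ with $\norm X\notin(0,c_0)$ for every orientable closed $X$; so if $N$ has even one hyperbolic JSJ piece then $d\le\norm{M}/c_0$ is bounded. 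The case that $N$ is a graph manifold --- where $\norm N=0$ and the degree is a priori unbounded --- is what remains, and it is the crux.

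Next I would put $f$ into standard position with respect to $\tori$. After homotoping $f$ transverse to $\tori$ and then, using irreducibility of $M$ and incompressibility of $\tori$, compressing and isotoping the preimage --- and standardizing further using that the pieces of $N$ are atoroidal or Seifert --- one reaches a position in which $\tori':=f^{-1}(\tori)$ is a system of pairwise disjoint, pairwise nonparallel essential tori in $M$, each component of $M\setminus\tori'$ maps properly into a single piece of $\pieces$, and the relative degrees over any fixed piece sum with signs to $d$. Kneser--Haken finiteness bounds $|\tori'|$ in terms of $M$ alone, so $M\setminus\tori'$ has boundedly many components; by surjectivity of $f$ every piece and every JSJ torus of $N$ is the image of one of these, so the numbers of JSJ pieces and tori of $N$ are bounded. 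A hyperbolic piece's homeomorphism type is then already determined, and for a Seifert piece $\mathcal N_j$ one bounds its type in terms of $M$ and $d$: a complementary piece $W$ of $M\setminus\tori'$ properly dominates it with nonzero degree, and since $f_*$ respects the canonical central regular-fibre subgroup of $\pi_1\mathcal N_j$, an Euler-characteristic-versus-degree comparison on the base orbifold (together with $b_1(N)\le b_1(M)$) bounds its genus and number of boundary circles, while divisibility at the exceptional fibres bounds their number and orders --- and a Seifert fibered space with nonempty boundary is determined up to homeomorphism by this data.

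It remains to bound the isotopy classes of the gluing maps of $N$ along its boundedly many JSJ tori --- equivalently, to bound the distortion $\distortion$ of the standardized $f$ near each such torus --- and this is the main obstacle, the one place where simplicial volume does not help for graph-manifold targets and the degree is not yet known to be bounded. The plan is to study, for each $\tau\in\tori$, the behaviour of $f$ on a collar of the components of $\tori'$ lying over $\tau$: on such a torus of $\tori'$ there is only a finite, $M$-determined list of distinguished slopes (the regular-fibre slopes of the Seifert pieces of $M$ meeting it, or its own slope as a vertical torus inside a Seifert piece of $M$), and well-definedness of $f$ across $\tau$ forces it to carry the regular-fibre slopes of the two Seifert pieces of $N$ adjacent to $\tau$ to distinguished slopes, which together with the integrality of the gluing maps of $N$ confines the gluing class of $\tau$ to a finite set. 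This is already visible for two Seifert pieces glued along one torus, where a well-defined degree-$d$ map forces an off-diagonal entry of $M$'s gluing matrix to be divisible by $d$, so that both $d$ and the resulting gluing matrix of $N$ lie in a finite set depending only on $M$. Propagating this control across the JSJ graph --- organized as an induction over the combinatorial configurations of pieces and tori --- then bounds the degree, the gluings, and hence the entire JSJ data of $N$, which proves the theorem. I expect the delicate part to be exactly this propagation: keeping the distortion bounds consistent along long chains of Seifert pieces, where the slope constraints imposed at successive JSJ tori interact globally.
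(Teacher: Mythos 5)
Your proposal diverges from the paper's proof at the crux point you yourself flag -- bounding the gluings -- and the route you sketch there has a genuine gap. Your plan is to read off finitely many allowable gluing classes on each JSJ torus $\tau$ of $N$ from a finite list of ``distinguished slopes'' on the preimage tori $T\subset f^{-1}(\tau)\subset M$, the distinguished slopes being regular-fibre slopes of Seifert pieces of $M$ meeting $T$. But if $T$ is a boundary torus of a \emph{hyperbolic} JSJ piece of $M$ (or sits between two hyperbolic pieces), there are no distinguished slopes on that side of $T$ in any usable sense: a proper map from a cusped hyperbolic piece of $M$ onto a Seifert piece of $N$ carries the peripheral $\ZZ^2$ of the cusp onto $\pi_1(\tau)$, but there is no canonical curve in the cusp whose image must align with the regular fibre of $\mathcal N_j$. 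Since $M$ is arbitrary (not assumed graph) this case is unavoidable -- indeed the mixed case $M$-hyperbolic-piece-onto-$N$-Seifert-piece is where the hard interaction happens -- and it is exactly where the slope-rigidity argument breaks down. The two-Seifert-pieces toy computation you give relies on fibre-preservation on \emph{both} sides of every preimage torus, which is precisely the hypothesis that fails here. Relatedly, your degree bound $d\le\norm M/c_0$ when $N$ has a hyperbolic piece is not used to constrain the gluing, and in the pure graph-manifold case (where $\norm N=0$) the degree is a priori unbounded, which you acknowledge but then appeal to a ``divisibility forces both $d$ and the gluing into a finite set'' step that I don't see how to justify, even in the toy case.

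For contrast: the paper sidesteps slope-rigidity entirely. It fixes a geometric metric on $N$ (hyperbolic on atoroidal pieces, $\HypEuc$ on Seifert pieces), straightens $f$ on the $2$-skeleton of a minimal triangulation of $M$ so that $\Area(f(M^{(2)}))\le 2\pi t(M)$, and shows (Lemma \ref{dominationOnto}, a Poincar\'e--Lefschetz duality argument) that the boundary classes of $f(M^{(2)})\cap\mathcal W_v$ must already span $\partial_*H_2(\mathcal W_v,\partial\mathcal W_v;\RR)$. An isoperimetric inequality (Lemma \ref{isoperimetric}) then converts bounded area into bounded values of the canonical quadratic form $\qf_\phi$ on a spanning set, and a Minkowski-type estimate (Lemma \ref{largeGenerator}) turns that into a bound on the discriminant, i.e.\ the distortion $\distortion_v(\phi)$. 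This argument does not care which geometry sits on the $M$ side, because it only uses the area of $f(M^{(2)})$ measured in $N$'s geometry -- which is what makes it robust against the hyperbolic-source obstruction to your slope argument. Finally, Proposition \ref{finiteGluings} (via the quadratic-form double-coset analysis and fibre-shearings) converts ``bounded distortion'' into ``finitely many gluings up to equivalence,'' which is the finiteness statement you wanted from slope constraints. If you want to salvage your route, you would at least need an analogue of fibre-rigidity at the boundary of a hyperbolic piece of $M$, and I don't see one; the area/distortion mechanism of the paper is the substitute.
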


In \cite{BRW}, M.~Boileau, H.~Rubinstein and S.~Wang asked the question:

\begin{question}\label{questionBRW} Does every orientable closed $3$--manifold dominate
at most finitely many irreducible homeomorphically distinct 
$3$--manifolds which support none of the geometries $\Sph$, $\Sft$,
or $\Nil$?
\end{question}

Note that any orientable closed $3$--manifold which supports the geometry
$\Sph$, $\Sft$, or $\Nil$ 
dominates infinitely many homeomorphically distinct $3$--manifolds,
which support the same geometry. 
Combined with known results for geometric targets, 
Theorem \ref{main-dominate} completes a positive answer to Question \ref{questionBRW},
(see Section \ref{Sec-geomPieces} for details). 
Moreover, we derive the following corollary, which answers 
an earlier question of Y.~Rong \cite[Problem 3.100]{Ki}:

\begin{corollary}\label{main-d-dominate} 
For any nonzero integer $d$,
every orientable closed $3$--manifold $d$--dominates only finitely many homeomorphically
distinct $3$--manifolds.\end{corollary}

Rong's original question was asked only about $1$--domination. 
By around 2002, many partial results had been proved, 
including a complete affirmative answer to Rong's
question for the target-geometric case, (see \cite{So,WZ}, etc.).
Those results clarified what kind of finiteness should be expected,
and motivated Question \ref{questionBRW}. 
We refer the reader to Wang's survey \cite{Wa};
see also the introduction of \cite{BRW} for more recent results.

The technical core for proving Theorem \ref{main-dominate} 
is a presentation length estimation, 
as developed in \cite{AL}, (see Proposition \ref{looplessGraphCase}). 
However, the proof of \cite{AL} 
relies on a special trick
for knot complements called desatellite.
In more general settings, estimates for complexity of gluings 
does not simply reduce to complexity of geometric pieces.
Indeed, finiteness of gluings becomes an essential issue 
for the present paper, (see Section \ref{Sec-geomPieces}).
To illustrate the idea, let us assume for the moment
that $M$ is an orientable closed $3$--manifold 
that dominates an orientable closed $3$--manifold
$N$,
and require that $N$ is obtained 
by gluing two one-cusped hyperbolic $3$--manifolds 
along their toral boundaries. 
After triangulating $M$ and
pulling straight $f$ with respect to the hyperbolic geometry
on both of the pieces, 
the area of the $2$--skeleton image $f(M^{(2)})$ should 
be at least the area in the neighborhood of the cutting torus $T\subset N$,
approximately the sum of its areas within the (Margulis) horocusps. 
When the gluing is complicated enough,
there could be at most one slope $\alpha$ on $T$ 
which is short on both sides,
(that is, no longer than some given bound
imposed by $M$). 
Let us pretend as if we could move $f(M^{(2)})\cap T$ into
 a regular neighborhood of $\alpha$ by homotopy on $T$.
Then $f\colon M\to N$ would factor through the drilling
$N-\alpha$ homotopically, 
violating the nonzero degree assumption. 
Therefore, the above argument leads to
an \textit{a priori} upper bound 
for the complexity of gluings assuming domination.
Similar as in \cite[Theorem 3.2]{AL}, 
the hypothetical factorization
does not truly exist in general.
However, it works in a certain homological sense,
which turns out to be sufficient for our application. 
When Seifert fibered pieces are involved, 
bounding only the local complexity of gluings 
near the cutting tori is not enough for inferring
finiteness of the allowable targets.
We handle that case by bounding the total complexity 
of the gluings at all the boundary components 
for each Seifert fibered piece. 
The spirit remains the same as the local case.
The actual proof of Theorem \ref{main-dominate}
is generated from the above idea, 
and with some significant simplification
that bypasses the factorization argument.
In fact, we make instead 
a Poincar\'{e}--Lefschetz duality argument,
available thanks to the nonzero degree assumption,
(see Lemma \ref{dominationOnto}).

In Section \ref{Sec-geomPieces}, we recall known facts
and reduce Question \ref{questionBRW}
to finiteness of gluings. In Section \ref{Sec-distortion}, 
we introduce a notion called \emph{distortion},
which provides a measurement for the complexity of gluings.
In Section \ref{Sec-JSJgluings}, 
we show that any given upper bound for the so-called 
primary average distortion 
implies gluing finiteness.
In Section \ref{Sec-nonGeomCase},
we prove Theorem \ref{main-dominate} 
by bounding the primary average distortion of any allowable targets.
In Section \ref{Sec-boundedDegree}, 
we prove Corollary \ref{main-d-dominate} 
by handling the remaining target-geometric cases,
using some generalized form of known arguments. 
In Section \ref{Sec-Conclusion}, we propose several questions
for further study.

\subsection*{Acknowledgement}
The author is grateful to his thesis advisor Ian Agol
for many helpful conversations. 
The author also thanks Hongbin Sun, Shicheng Wang 
for valuable communications,
and an anonymous referee 
for a list of suggestions 
on improving the exposition of the present paper.

\section{Background}\label{Sec-geomPieces}
In this section, 
we review known results, which essentially reduce Question \ref{questionBRW}
to finiteness of gluings.
For standard terminology and facts of $3$--manifold topology, 
see \cite{Ja,Th-notes,MF}.

Let $N$ be any orientable closed irreducible $3$--manifold. 
The Geometrization
Theorem implies that there is a canonical geometric decomposition,
which cuts $N$  into compact pieces
along a (possibly empty) minimal finite collection of 
mutually disjoint essential tori and Klein bottles.
Every piece is geometric in the sense that it supports 
one of the eight $3$--dimensional geometries of finite volume.
Moreover, the collection of cutting essential tori and Klein bottles
is unique up to isotopy. 
%This may be regarded as a graph-of-spaces decomposition of $N$, 
%cf. Subsection \ref{Subsec-gluings}. 

When $N$ is not itself geometric, the pieces are either atoroidal or Seifert fibered.
The atoroidal pieces support the $\Hyp$--geometry, and the Seifert fibered ones support
the $\HypEuc$--geometry (and also the $\Sft$--geometry). 
When $N$ is geometric, there are three cases:
$N$ is atoroidal, supporting
the $\Hyp$-geometry; 
or $N$ is Seifert fibered, supporting one of the six geometries
$\HypEuc$, $\Sft$, $\Euc$, $\Nil$, $\SphEuc$, or $\Sph$; 
or otherwise, $N$ supports the $\Sol$--geometry. 
Moreover, the geometry for the Seifert fibered case 
can be determined
by the sign of the orbifold Euler characteristic $\chi\in\QQ$ 
of the base $2$--orbifold 
together with 
the triviality of the Euler number $e\in\QQ$ of the fibration.

The geometric decomposition of $N$ is closely related to
the Jaco--Shalen--Johanson (JSJ) decomposition.
When $N$ is not $\Sol$--geometric,
all the geometric pieces of $N$ are JSJ pieces,
and the other JSJ pieces of $N$ are
compact regular neighborhoods of the cutting Klein bottles.
When $N$ is $\Sol$--geometric,
the JSJ decomposition cuts $N$ along an essential torus,
resulting in one or two JSJ pieces.
In the former case, the JSJ piece is homeomorphic to an \emph{orientable thickened torus},
by which we mean the trivial interval-bundle over a torus;
in the latter case, the JSJ pieces are both
homeomorphic to an \emph{orientable thickened Klein bottle}, 
by which we mean the interval-bundle over a Klein bottle whose bundle space is orientable.

Towards Question \ref{questionBRW}, we have the following known partial answers,
each of which captures certain finiteness about the target manifolds
assuming domination from a given $3$--manifold:

\begin{theorem}[{Boileau--Boyer--Wang \cite[Corollary 3.6]{BBW}}]\label{ThmBBW} 
Every orientable closed $3$--manifold dominates at most
finitely many homeomorphically distinct $3$--manifolds
which are $\Sol$--geometric.
\end{theorem} 

\begin{theorem}[{Boileau--Rubinstein--Wang \cite[Theorem 1.1]{BRW}}]\label{ThmBRW} 
Given any orientable closed $3$--manifold $M$,
there exists a finite collection of orientable compact $3$--manifolds 
%which are irreducible
%atoroidal or Seifert fibered with possibly empty incompressible tori boundary,
with the following property:
For any orientable closed irreducible $3$--manifold $N$ 
which supports none of the geometries $\Sph$, $\Sft$ or $\Nil$,
if $N$ is dominated by $M$, then every JSJ piece of $N$ is
homeomorphic to one of the $3$--manifolds from the asserted collection.
\end{theorem}

\begin{lemma}[{See \cite[Lemma 4.2]{BRW}}]\label{LemmaBRW}
Let $M$ be an orientable closed $3$--manifold.
Denote by $h(M)$ the Kneser--Haken number of $M$, namely,
the maximally possible number 
of mutually disjoint, mutually non-parallel
essential subsurfaces of $M$.
Suppose that $N$ is an irreducible orientable closed $3$--manifold 
which is non-geometric.
If $N$ is dominated by $M$, 
then the geometric decomposition of $N$
yields at most $h(M)$ cutting tori and Klein bottles,
and at most $h(M)+1$ geometric pieces.
\end{lemma}

We explain how the above partial results 
motivate Theorem \ref{main-dominate} and 
motivate our plan of proof.
We observe that Theorems \ref{ThmBBW} and \ref{ThmBRW} 
has already answered Question \ref{questionBRW}
positively assuming the target $3$--manifold to be geometric,
so it remains to show finiteness of the nongeometric targets,
as stated by Theorem \ref{main-dominate}.
In this case,
the homeomorphism type of any target $3$--manifold
is determined by three parts of data:
the homeomorphism types of the geometric pieces,
and the adjacency relation between the pieces,
and the actual identification between the boundary components
of the pieces subject to the adjacency relation.
For the moment, 
let us informally refer to the former two as the preglue data,
and the last one as the gluing.
Given any $3$--manifold as the source of domination, 
Theorem \ref{ThmBRW} says that the allowable homeomorphism types of the geometric pieces
are bounded.
Lemma \ref{LemmaBRW} says that 
any allowable target must not have too many geometric pieces, 
and therefore,
the allowable types of the preglue data are bounded as well.
It is the remaining goal for the present paper  
to bound the allowable ways of the gluing.
Strictly speaking, there are certain evident operations on gluings
which keep the resulting $3$--manifold homeomorphically unchanged,
so we need to bound the allowable gluings up to certain equivalence relation.
The precise definitions are introduced in Section \ref{Subsec-gluings}.
The above reduction of the goal is stated more formally
as Proposition \ref{fixing_preglue} and Corollary \ref{fixing_preglue_corollary}.
For each equivalence class of gluings we are able to introduce 
a geometrically defined complexity, called the primary average distortion.
This is done in Section \ref{Sec-distortion}, 
see Definition \ref{primaryDistortion}.
We show finiteness of the equivalence classes 
under the assumption of uniformly bounded complexity (Proposition \ref{finiteGluings}).
We obtain a uniform complexity bound, 
which relies only on the homeomorphism type of the given dominating $3$--manifold,
(Proposition \ref{distortionDomination}).
Then Theorem \ref{main-dominate} follow from the above steps.
A summary of the proof is included at the end of Section \ref{Sec-nonGeomCase}.

%
%Now suppose $M$ is an orientable closed $3$--manifold, and $N$ is an irreducible non-Seifert-fibered
%$3$--manifold dominated by $M$. 
%It has been pointed out in \cite[Lemma 4.2]{BRW} that the Kneser--Haken 
%number $h(M)$ (i.e. the maximal possible number of components of essential subsurfaces) of $M$ bounds that of $N$,
%so the number of geometric pieces of $N$ is at most $h(M)+1$, and the number of cutting 
%tori and Klein-bottles of $N$ is at most $h(M)$. Thus there are at most finitely many allowable 
%isomorphism types of the underlying graph of the geometric decomposition of $N$.
%As $N$ is non-geometric, the allowable homeomorphism types of geometric pieces have been bounded already,
%so our goal is
%to bound the allowable ways to glue these pieces up.

\section{Distortion of $3$--manifolds}\label{Sec-distortion}
In this section, we introduce a geometrically defined quantity
called primary average distortion.
For any gluing of a prescribed collection of geometric pieces
with a prescribed adjacency pattern,
the quantity measures intuitively
the extent to which the resulting $3$--manifold 
is not geometric.
The primary average distortion for an orientable closed irreducible $3$--manifold
is therefore defined by reversing the process of geometric decomposition.
%It naturally implies the definition for $3$--manifolds by the geometric decomposition.

\subsection{Gluings of geometric pieces}\label{Subsec-gluings}
With the geometric decomposition in mind, 
we formally define gluings, for
any prescribed collection of geometric pieces
with a prescribed adjacency pattern.
The prescribed data is put together as
what we call a preglue graph of geometrics.
Roughly speaking, it is a certain graph whose vertices 
are decorated by the geometric pieces to be glued up,
and whose edges indicate the pairing relationship 
between the boundary components to be glued along.
However, we have to allow generalized edges which are one-ended,
so as to represent cutting Klein bottles.
Meanwhile, we need to distinguish some vertices,
whose decorating pieces contain embedded Klein bottles,
because they look different from others in the subsequent treatments.

Throughout this paper, we use the term \emph{graph}
in a generalized sense, 
allowing semi vertices and semi edges.
%In other words, it is an orbi-graph with isolated cone points order $2$.
Any such graph $\Lambda$ can be modeled
as the orbit space of a cellular $1$--complex with a combinatorial involution 
which fixes no $1$--cells.
(The cellular complex and the involution are considered to be data.)
The set of \emph{vertices} $\vrtx(\Lambda)$ consists of 
the orbits of the $0$--cells.
A vertex is said to be \emph{entire}
or \emph{semi} according to the order of the $0$--cell orbit,
being $2$ or $1$ respectively.
The set of \emph{edges} $\edge(\Lambda)$
consists of the orbits of the $1$--cells.
An edge is said to be \emph{entire}
or \emph{semi} according to the order of the $1$--cell orbit,
being $2$ or $1$ respectively. 
The set of \emph{ends of edges} $\widetilde\edge(\Lambda)$
consists of the orbits of ends of $1$--cells.
Every edge end is \emph{owned by} a unique edge,
and is \emph{adjacent to} a unique vertex, in the obvious sense.
The \emph{valence} of a vertex is the number of 
the ends of edges which are adjacent to it. 
The ownership by edges yields a canonical projection
$\widetilde\edge(\Lambda)\to \edge(\Lambda)$,
which can be considered as a double branched covering
ramifying precisely at the semi edges.
The covering transformation is therefore an involution
$\widetilde\edge(\Lambda)\to\widetilde\edge(\Lambda)$,
which fixes the ends of semi edges and which switches 
the pairs of ends of entire edges.
For any end of edge $\delta\in\widetilde\edge(\Lambda)$,
we denote by $\bar\delta$ its image under the covering transformation,
and refer to $\bar\delta$ as the \emph{opposite end} of $\delta$.

One may alternatively model a graph with semi vertices and semi edges
as a cellular $1$--complex with some dead-end $0$--cells marked as
semi midpoints, and with the rest $0$--cells marked 
as either semi or entire vertices.
The $1$--cells attached to the former $0$--cells
are therefore marked as semi edges, and the rest $1$--cells
are marked as entire edges.
See Figure \ref{figOrbigraph}.
The orbit space model can be recovered from the alternative model by
double branched covering 
ramifying at the semi vertices and the semi midpoints,
and conversely by collapsing the orbits of points. 

\begin{figure}[htb]
\centering
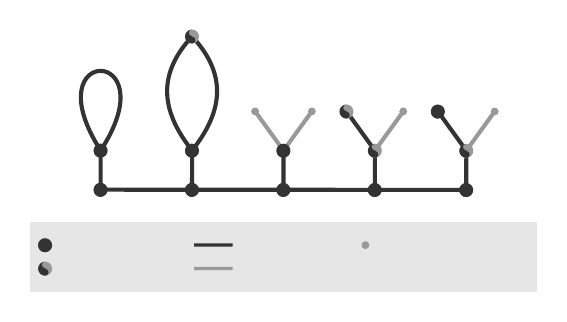
\caption{
A graph with 9 entire vertices, 4 semi vertices,
14 entire edges, and 4 semi edges
}\label{figOrbigraph}
\end{figure}

%% Template, to be built with LaTeX to PDF
%\begin{figure}[h]
%\centering{
%\resizebox{75mm}{!}{\input{latex_test.pdf_tex}}
%\caption{Top view.}
%\label{fig:topView}
%}
%\end{figure}

\begin{definition}\label{preglue} 
A \emph{preglue graph of geometrics} is 
defined to be a connected finite graph $\Lambda$
together with an assignment for $\vrtx(\Lambda)$ and $\widetilde\edge(\Lambda)$
as follows.
Every vertex 
$v\in\vrtx(\Lambda)$ 
is assigned with an oriented compact geometric $3$--manifold $J_v$.
We require that the components of $\partial J_v$ are all incompressible tori,
and that the number of components of $\partial J_v$ equals the valence $n_v$ of $v$.
We also require that $J_v$ contains an embedded Klein bottle 
if and only if $v$ is a semi vertex.
Every edge end $\delta\in\widetilde\edge(\Lambda)$ 
which is adjacent to $v$ 
is assigned with a distinct component $T_{\delta}$ of $\partial J_v$,
equipped with the induced orientation. 
Let $\pieces$ be the disjoint union of the pieces $J_v$
for all $v\in\vrtx(\Lambda)$.
Unless otherwise mentioned,
we always use the defining pair
$(\Lambda,\pieces)$ to denote a preglue graph of geometrics, 
with the assignment implicitly assumed.
Two preglue graphs of geometrics $(\Lambda,\pieces)$ and $(\Lambda',\pieces')$
are said to be \emph{isomorphic} if there are a combinatorial isomorphism
$\Lambda\to\Lambda'$ of graphs and a homeomorphism $\pieces\to\pieces'$
which respects the assignments.
\end{definition}

\begin{remark} 
A vertex $v\in\vrtx(\Lambda)$ is semi if and only if
the geometric piece $J_v$ is Seifert fibered 
over a non-orientable $2$--orbifold.
\end{remark}

\begin{definition}\label{gluing} 
A \emph{gluing} for a preglue graph of geometrics
$(\Lambda,\pieces)$ is defined to be
an orientation-reversing involution
$$\phi\colon \partial\pieces\to \partial\pieces$$ 
with the following properties.
For every edge end $\delta\in\widetilde\edge(\Lambda)$,
the restriction of $\phi$ to the torus component $T_\delta$
is an orientation-reversing homeomorphism onto the torus assigned to the opposite end,
denoted as $\phi_\delta\colon T_{\delta}\to T_{\bar\delta}$.
Note that $\phi_{\bar\delta}=\phi^{-1}_{\delta}$
holds for all $\delta\in \widetilde\edge(\Lambda)$. 
We treat isotopic involutions as above 
to be the same gluing.
Denote by $\Phi(\Lambda,\pieces)$
the set of all the (isotopy classes of) gluings for $(\Lambda,\pieces)$. 

A gluing $\phi$ is said to be \emph{nondegenerate} 
if it does not send ordinary fibers to ordinary fibers, up to isotopy, 
for any pair of (possibly the same) adjacent Seifert-fibered pieces.
\end{definition} 

Denote by $\Mod(\partial\pieces)$ 
the special mapping class group of $\partial\pieces$,
namely, the group of component-preserving, orientation-preserving 
self-homeomorphisms of $\partial\pieces$
modulo isotopy.
There is a natural (right) action of $\Mod(\partial\pieces)$ on the space of gluings
$\Phi(\Lambda,\pieces)$. 
To be precise,
for any $\tau\in\Mod(\partial\pieces)$ and $\phi\in\Phi(\Lambda,\pieces)$,
the transformed gluing $\phi^\tau\in\Phi(\Lambda,\pieces)$ can be defined by:
$$\phi^\tau=\tau^{-1}\circ\phi\circ\tau,$$
abusing the notations for isotopy classes and their representatives.
For any $\delta\in\widetilde\edge(\Lambda)$,
we have $(\phi^\tau)_{\delta}=\tau^{-1}_{\bar\delta}\circ\phi_\delta\circ\tau_\delta$,
where $\tau_\delta\in \Mod(T_\delta)$ stands for the restriction of $\tau$ to $T_\delta$. 
The action of $\Mod(\partial\pieces)$ on $\Phi(\Lambda,\pieces)$
is clearly well-defined and transitive.

\begin{definition}\label{eqvGluing} 
Two gluings $\phi, \phi'\in\Phi(\Lambda,\pieces)$ are said to be
\emph{equivalent} if 
$\phi'$ equals $\phi^\tau$
for some homeomorphism $\tau\in\Mod(\partial\pieces)$ 
that can be extended over $\pieces$ 
to be a self-homeomorphism. 
\end{definition}

Every gluing $\phi\in\Phi(\Lambda,\pieces)$
yields a naturally associated orientable closed $3$--manifold, 
which we denote as
$$N_\phi=\pieces/\phi.$$
Namely, $N_\phi$ is 
the quotient space of $\pieces$ by identifying points of $\partial\pieces$ with their
images under $\phi$. 
It is clear 
by definition 
that $N_\phi$ has the same geometric decomposition as prescribed by $(\Lambda,\pieces)$
%and $\phi$ 
if and only if $\phi$ is nondegenerate.

The homeomorphism type of the $3$--manifold $N_\phi$ is clearly determined by 
the isomorphism class of the preglue graph of geometrics $(\Lambda,\pieces)$
together with the equivalence class of the gluing $\phi$.
In particular, 
fixing any preglue graph of geometrics,
we see that equivalent gluings yield homeomorphic $3$--manifolds.

\begin{proposition}\label{fixing_preglue}
	Let $M$ be an orientable closed $3$--manifold. 
	There are at most finitely many 
	distinct isomorphism classes of preglue graphs of geometrics $(\Lambda,\pieces)$
	with the following property:
	The graph $\Lambda$ is nontrivial (not a single vertex without any edge),
	and	the $3$--manifold $M$ dominates the glued-up $3$--manifold $N_\phi$ 
	for at least one nondegenerate gluing $\phi\in\Phi(\Lambda,\pieces)$.
\end{proposition}

\begin{proof}
	If $M$ dominates some $N_\phi$ as in the property, 
	the geometric decomposition of $N_\phi$ is nontrivial,
	and the geometric pieces are listed by $\pieces$ with the adjacency relation
	indicated by $\Lambda$. By Theorem \ref{ThmBRW}, 
	there is a finite collection of candidate pieces,
	determined by $M$,
	and any component of $\pieces$ subject to the property
	is homeomorphic to one of the candidates.
	By Lemma \ref{LemmaBRW}, the vertex number and the edge number of $\Lambda$ are bounded
	in terms of $M$. 
	It follows that there is a finite collection of candidate graphs,
	and any $\Lambda$ subject to the property is combinatorially isomorphic to one of the candidates.
	Moreover, there are only finitely many ways to decorate the vertices and the ends of edges
	of the candidate graphs with the candidate pieces.
	This shows the finiteness of
	the preglue graph of geometrics 
	$(\Lambda,\pieces)$ up to isomorphism subject to the property.
\end{proof}

We record the following working corollary, which reduces 
the proof of Theorem \ref{main-dominate} to the finiteness of 
allowable gluings up to equivalence.

\begin{corollary}\label{fixing_preglue_corollary}
	The statement of Theorem \ref{main-dominate} holds true
	if the following statement holds true:
	For any orientable closed $3$--manifold $M$ and any preglue graph of geometrics $(\Lambda,\pieces)$,
	there are at most finitely many distinct equivalence classes
	of nondegenerate gluings $\phi\in\Phi(\Lambda,\pieces)$
	such that $M$ dominates the glued-up $3$--manifold $N_\phi$.
\end{corollary}

\begin{proof}
	Given any $3$--manifold $M$, we obtain a finite collection of nongeometric glued-up $3$--manifolds $N_1,\cdots,N_s$
	by Proposition \ref{fixing_preglue} and the hypothetical statement of Corollary \ref{fixing_preglue_corollary}.
	We may discard redundant ones and assume that $N_1,\cdots,N_s$ are homeomorphically distinct.
	Observe that any orientable closed irreducible nongeometric $3$--manifold $N$ 
	is homeomorphic to some $3$--manifold $N_\phi$, 
	which is obtained from a nondegenerate gluing of a preglue graph of geometrics $\phi\in(\Lambda,\pieces)$.
	Then $N$ has to be homeomorphic to some $N_i$ of the above if $M$ dominates $N$.
	In other words, 
	$N_1,\cdots,N_s$ is the homeomorphically distinct finite collection 
	as asserted by Theorem \ref{main-dominate}.
\end{proof}

\subsection{Quadratic forms associated to gluings}\label{Subsec-qfGluings}
For any gluing $\phi\in\Phi(\Lambda,\pieces)$,
we introduce a distinguished quadratic form 
$\qf_\phi$ on the real vector space $H_1(\partial\pieces;\RR)$,
which is $\phi$--invariant and positive semidefinite.
It is positive definite if and only if $\phi$ is nondegenerate.
The quadratic form $\qf_\phi$ is constructed 
as follows.

Given any preglue graph of geometrics $(\Lambda,\pieces)$,
first we introduce a distinguished 
positive semidefinite quadratic form $\qf_J$
on $H_1(\partial J;\RR)$ for each component $J$ of $\pieces$.
We assume $\partial\pieces\neq\emptyset$, and hence $\partial J\neq\emptyset$,
otherwise there is nothing to construct.
The quadratic form $\qf_J$ is constructed as the sum
of its restrictions to the direct-summands $H_1(T;\RR)$ for all
the components $T$ of $\partial J$.
There are two cases according to the geometric topology of $J$.

If $J$ is atoroidal, 
the interior of $J$ admits a complete Riemannian metric 
which is hyperbolic of finite volume.
%Denote by $J_\geo$ the resulting cusped hyperbolic $3$--manifold
The metric is unique up to isotopy by Mostow's Rigidity Theorem.
For any component $T$ of $\partial J$, 
the restriction of $\qf_J$ to $H_1(T;\RR)$ is determined by
a distinguished translation-invariant Euclidean metric on the affine torus $H_1(T;\RR)/H_1(T;\ZZ)$,
which has systole $1$ and which represents the marked conformal class of the corresponding cusp.
To be more explicit,
denote by $J_\geo$ the interior of $J$ with the hyperbolic metric. 
For any sufficiently small constant $\epsilon>0$, the compact $\epsilon$--thick part
$J^\epsilon_\geo$ is obtained from $J_\geo$ by removing mutually disjoint horocusps.
The boundary component $\partial_T J^\epsilon_\geo$ that corresponds to $T$
is Euclidean with the induced Riemannian metric.
Moreover, by rescaling so that the shortest simple closed geodesic has length $1$,
the rescaled Euclidean metric for $\partial_T J^\epsilon_\geo$
is independent of $\epsilon$.
As $\partial_T J^\epsilon_\geo$ is naturally homeomorphic to $T$ up to isotopy,
the tangent space of the rescaled Euclidean torus $\partial_T J^\epsilon_\geo$
at any point can be identified canonically with $H_1(T;\RR)$,
by requiring that the exponential map sends $H_1(T;\ZZ)$ to the same point.
Then we construct the restriction of $\qf_J$ on $H_1(T;\RR)$
as the positive definite quadratic form
associated to the rescaled Euclidean metric form on $H_1(T;\RR)$.

If $J$ is Seifert fibered, 
for any component $T$ of $\partial J$,
we construct the restriction of $\qf_J$ to $H_1(T;\RR)$
as the square of the intersection pairing with the ordinary fiber class.
To be more explicit,
observe the canonical short exact sequence of groups:
$$1\longrightarrow\pi_1(S^1)\stackrel{i}\longrightarrow\pi_1(J)\stackrel{p}{\longrightarrow}\pi_1(\mathcal{O})\longrightarrow1,$$ 
induced from the Seifert fibration, 
where $\mathcal{O}$ stands for the hyperbolic base $2$--orbifold. 
Identify $\pi_1(T)$ as a subgroup of $\pi_1(J)$ by the inclusion,
which is unique up to conjugacy.
For any element $\zeta\in\pi(T)$, 
$p(\zeta)$ is either trivial or a positive power of a primitive element of $\pi_1(\mathcal{O})$,
so the divisibility of $p(\zeta)$ is defined to be either $0$ or the power, respectively.
By idenfitying $\pi_1(T)$ as the integral lattice of $H_1(T;\RR)$,
the restriction of $\qf_J$ to $H_1(T;\RR)$ is uniquely determined
by the property that $\qf_J(\zeta)$ equals the square of the divisibility of $p(\zeta)$.

For any gluing $\phi\in\Phi(\Lambda,\pieces)$,
the distinguished quadratic form $\qf_\phi$ on $H_1(\partial\pieces;\RR)$
 can be constructed as the sum $\qf_{\mathcal{J}}+\qf_{\mathcal{J}}\phi_*$,
where $\qf_{\mathcal{J}}$ stands for the quadratic form on $H_1(\partial\pieces;\RR)$
given by the direct sum of all the components $\qf_J$,
and
where $\phi_*\colon H_1(\partial\pieces;\RR)\to H_1(\partial\pieces;\RR)$
stands for the induced involution.
The following definition unwraps the formula with more details.

\begin{definition}\label{qfGluing} 
Given a preglue graph of geometrics $(\Lambda,\pieces)$,
suppose that $\phi\in\Phi(\Lambda,\pieces)$ is a gluing. 
For any end of an edge $\delta\in\widetilde\edge(\Lambda)$, 
denote by $v,v'\in\vrtx(\Lambda)$ 
the vertices adjacent to $\delta$ and its opposite $\bar\delta$, respectively.
We assign the restriction of $\qf_\phi$ to $H_1(T_\delta;\RR)$ by
$$\qf_\phi(\zeta)=\qf_{J_v}(\zeta)+\qf_{J_{v'}}(\phi_\delta(\zeta))$$
for all $\zeta\in H_1(T_\delta;\RR)$.
Note that this is well-defined when $\delta$ equals $\bar\delta$. 
The distinguished quadratic from $\qf_\phi$ on 
$$H_1(\partial\pieces;\RR)=\bigoplus_{\delta\in\widetilde\edge(\Lambda)} H_1(T_\delta;\RR)$$
is defined as the sum of the assigned restrictions to the direct summands.
\end{definition} 

It follows immediately from the construction that $\qf_\phi$ is positive definite 
if and only if $\phi$ is nondegenerate. 
%In this case, $H_1(\partial\pieces;\RR)$ is a Euclidean inner product space 
%with the induced inner product structure. 
We also observe that equivalent gluings induce the same quadratic form.

\subsection{Distortion of gluings}\label{Subsec-distortionGluings}
Given a preglue graph of geometrics $(\Lambda,\pieces)$,
we introduce the primary average distortion 
for any gluing $\phi\in\Phi(\Lambda,\pieces)$.
We start by introducing the average distortions of a gluing
at vertices and along edges.
Intuitively, these quantities
measure the local complexity of the gluing 
around the mentioned places,
(or the local obstruction to extending geometry across those places).

For any finitely generated free $\ZZ$--module $V$
and any real quadratic form $\qf$ 
on the real vector space $V_\RR=V\otimes_\ZZ\RR$,
we denote by
$$\Delta(V,\qf)\in\RR$$
the discriminant, namely,
the determinant of the matrix 
of the associated symmetric bilinear form for $\qf$ 
over any basis of $V$.
When $\qf$ is positive definite, 
$\Delta(V,\qf)$ equals the square of the volume of the Euclidean torus
$V_\RR\,/\,V$ (of dimension the rank of $V$),
whose Euclidean structure is given by the induced inner product of $\qf$.
 
\begin{definition}\label{edgeDistortion}
Let $\phi\in\Phi(\Lambda,\pieces)$ be a gluing, and let $e\in\edge(\Lambda)$ be an edge. 
We define the \emph{average distortion} (or simply, the \emph{distortion}) of $\phi$ along $e$ to be
$$\distortion_e(\phi)=\Delta\left(H_1(T_\delta;\ZZ),\qf_\phi\right)^{\frac14},$$
where $\delta$ stands for any end of $e$. Note the definition does not depend on the choice of the end.
\end{definition}

\begin{definition}\label{vertexDistortion}
Let $\phi\in\Phi(\Lambda,\pieces)$ be a gluing, and let $v\in\vrtx(\Lambda)$ be a vertex of valence $n_v$.
Suppose $n_v>0$. 
If $v$ is an entire vertex, we define the \emph{average distortion} (or simply,
the \emph{distortion}) of $\phi$ at $v$ to be
$$\distortion_v(\phi)=\Delta\left(\partial_*H_2(J_v,\partial J_v;\ZZ),\qf_\phi\right)^{\frac1{2n_v}},$$
where $\partial_*H_2(J_v,\partial J_v;\ZZ)$ stands for
the image of $H_2(J_v,\partial J_v;\ZZ)$ in $H_1(\partial\pieces;\RR)$
under the boundary homomorphism. 
If $v$ is a semi vertex, $J_v$ is Seifert fibered with a nonorientable base $2$--orbifold. 
Denote by $\tilde{J}_v$ the $2$--fold covering space of $J_v$ which corresponds to centralizer of its ordinary fiber.
Denote by $\tilde\qf_\phi$ the quadratic form on $H_1(\partial \tilde{J}_v;\RR)$
which equals the sum of its restrictions to the direct summands $H_1(\tilde{T};\RR)$,
for all the components $\tilde{T}$ of $\partial\tilde{J}_v$,
and 
which equals the pull-back of $\qf_\phi$ restricted to  $H_1(\tilde{T};\RR)$.
We define
$$\distortion_v(\phi)=\Delta\left(\partial_*H_2(\tilde{J}_v,\partial\tilde{J}_v;\ZZ),\tilde\qf_\phi\right)^{\frac1{4n_v}}.$$
We define $\distortion_v(\phi)=0$ for $n_v=0$.
\end{definition}

\begin{remark} The edge case is similar to the vertex case.
In fact, one may take a compact regular neighborhood $\mathcal{U}_e$ of $T_e$ 
for the role of the above $J_v$, 
then the formulas of Definition \ref{vertexDistortion} 
agree with Definition \ref{edgeDistortion}.
%because $\partial_*H_2(\mathcal{U}_e,\partial\mathcal{U}_e;\ZZ)\cong H_1(T_\delta)$
%is a canonical isomorphism. One can also restate the definition
%of average distortion along semi-edges.
\end{remark}

\begin{definition}\label{primaryDistortion}
Let $\phi\in\Phi(\Lambda,\pieces)$ be a gluing of a preglue graph of geometrics $(\Lambda,\pieces)$. 
We define the \emph{primary average distortion} (or simply, the \emph{primary distortion}) of $\phi$ to be
$$\distortion_\Lambda(\phi)=\max
\left\{\distortion_v(\phi),\distortion_e(\phi)\colon v\in\vrtx(\Lambda),
e\in\edge(\Lambda)\right\}.$$
For any orientable closed irreducible $3$--manifold $N$,
the geometric decomposition of $N$ determines 
a canonical preglue graph of geometrics $(\Lambda,\pieces)$ 
and a canonical gluing $\phi\in\Phi(\Lambda,\pieces)$.
We define the \emph{primary distortion} of $N$ to be
$$\distortion(N)=\distortion_\Lambda(\phi)$$
with the above notations.
\end{definition}

\begin{remark} 
%The adjectives `average' and `primary' suggest
%there could be other reasonable formulations of local
%and global distortions.
To explain the names,
we actually think of the average distortion
at an entire vertex as the multiplicative average of 
``the distortions in the principal directions'' on $\partial_*H_2(J_v,\partial J_v;\RR)$,
with respect to the Euclidean norm defined by $\qf_\phi$ and the lattice
$\partial_*H_2(J_v,\partial J_v;\ZZ)$. 
The same idea applies to entire edges.
Average distortions for semi objects 
are understood likewise
by passing to a characteristic $2$--fold covering space.\end{remark}

Primary distortion measures how far a $3$--manifold is from being geometric:

\begin{lemma}\label{vanishingDistortion} For an orientable closed irreducible $3$--manifold $N$,
the primary distortion $\distortion(N)$ vanishes if and only if $N$ is geometric.\end{lemma}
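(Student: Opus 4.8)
The plan is to read both implications off the definitions; the geometric direction is immediate, and the content lies in the non-geometric direction, which reduces to a short linear-algebra argument along a single edge of the decomposition graph.

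If $N$ is geometric, then $N$ already supports one of the eight geometries of finite volume, so the minimal collection of cutting tori and Klein-bottles in its geometric decomposition is empty; the associated graph $\Lambda$ then has one vertex $v$ with $J_v=N$ closed, valence $n_v=0$, and no edges. Hence $\distortion_v(\phi)=0$ by Definition \ref{vertexDistortion} (since $n_v=0$), there are no edge terms, and $\distortion(N)=\distortion_\Lambda(\phi)=0$. The one subtlety is the $\Sol$-geometric case: such an $N$ has a nontrivial JSJ decomposition but a trivial geometric decomposition in the present sense, so the same computation applies; alternatively, if one presents such $N$ via a thickened-torus or two thickened-Klein-bottle pieces, then $\qf_J\equiv 0$ on those flat pieces, so $\qf_\phi\equiv 0$ and every discriminant vanishes.

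Now suppose $N$ is non-geometric. Then its geometric decomposition is nontrivial, so $\Lambda$ has at least one edge $e$; as recalled in Section \ref{Sec-geomPieces}, every piece $J_v$ supports $\Hyp$ or $\HypEuc$, and the realizing gluing $\phi\in\Phi(\Lambda,\pieces)$ is nondegenerate (Definition \ref{primaryDistortion}). It suffices to produce one positive term of $\distortion_\Lambda(\phi)$, and I would take the edge term $\distortion_e(\phi)=\Delta(H_1(T_\delta;\ZZ),\qf_\phi)^{1/4}$ for an end $\delta$ of $e$; since the discriminant of a positive-definite form over a lattice is positive, it is enough to show that $\qf_\phi$ restricts to a positive-definite form on $H_1(T_\delta;\RR)$.

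This is the heart of the matter. Let $v$ be the vertex adjacent to $\delta$ and $v'$ the one adjacent to the opposite end $\bar\delta$ (allowing $v'=v$ and $\bar\delta=\delta$ when $e$ is a semi-edge, in which case $\phi_\delta$ is an orientation-reversing involution of $T_\delta$). For $\zeta\in H_1(T_\delta;\RR)$ we have $\qf_\phi(\zeta)=\qf_{J_v}(\zeta)+\qf_{J_{v'}}(\phi_\delta(\zeta))\ge 0$, so if $\qf_\phi(\zeta)=0$ with $\zeta\ne 0$ both summands vanish. If $J_v$ supported $\Hyp$ then $\qf_{J_v}$ would be positive-definite, forcing $\zeta=0$; hence $J_v$ is Seifert-fibered, and by the construction of $\qf_{J_v}$ in the $\HypEuc$ case (Subsection \ref{Subsec-qfGluings}) the vanishing $\qf_{J_v}(\zeta)=0$ means $\zeta$ lies on the ordinary-fiber line of $T_\delta$. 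By the same reasoning $J_{v'}$ is Seifert-fibered and $\phi_\delta(\zeta)$ lies on the ordinary-fiber line of $T_{\bar\delta}$; since $\phi_\delta$ is a linear isomorphism carrying one line to another, it identifies the ordinary-fiber lines of $T_\delta$ and $T_{\bar\delta}$. But that is precisely $\phi$ matching up ordinary-fibers in the adjacent Seifert-fibered pieces $J_v$ and $J_{v'}$, contradicting nondegeneracy of $\phi$. Therefore $\qf_\phi|_{H_1(T_\delta;\RR)}$ is positive-definite, $\distortion_e(\phi)>0$, and $\distortion(N)=\distortion_\Lambda(\phi)\ge\distortion_e(\phi)>0$. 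The main obstacle is exactly this implication — that nondegeneracy of the gluing keeps $\qf_\phi$ nondegenerate even when restricted to a single boundary torus — plus the modest bookkeeping for the self-gluing (semi-edge) case and for pinning down the $\Sol$ convention, neither of which poses a genuine difficulty.
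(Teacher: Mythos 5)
Your proof is correct and follows essentially the same route as the paper's: the geometric direction is immediate from the trivial decomposition, and the non-geometric direction reduces to observing that nondegeneracy of the gluing forces $\qf_\phi$ to be positive-definite on each $H_1(T_\delta;\RR)$, giving a positive edge distortion. The paper simply cites the fact, already recorded in Subsection \ref{Subsec-qfGluings}, that $\qf_\phi$ is positive-definite if and only if $\phi$ is nondegenerate, whereas you re-derive it inline; the content is the same.
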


\begin{proof} It follows immediately from the fact that geometric decompositions
must not match up ordinary fibers in adjacent Seifert fibered pieces. 
In fact, the associated gluing is nondegenerate if $N$ is non-geometric, 
so no average distortion vanishes
along any edge.\end{proof}

Primary distortion is preserved for finite covering of the graph:

\begin{lemma}\label{graphCovering} 
Let $\tilde{N}\to N$ be a finite covering 
onto an orientable closed irreducible $3$--manifold.
Suppose that for every geometric piece $\tilde{J}\subset \tilde{N}$,
the restricted covering projection of $\tilde{J}$
onto the underlying image $J\subset N$ 
is either a homeomorphism, or
a $2$--fold covering which corresponds 
to the ordinary-fiber centralizer.
(The latter occurs only if $J$ is Seifert fibered over a nonorientable base $2$--oribifold.)
Then $\distortion(\tilde{N})$ equals $\distortion(N)$.\end{lemma}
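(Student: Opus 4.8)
The plan is to compare the primary distortions vertex-by-vertex and edge-by-edge, using the hypothesis that the covering $\tilde N \to N$ respects the geometric decomposition in the prescribed way. Let $(\Lambda,\pieces)$ with gluing $\phi$ realize $N$, and let $(\tilde\Lambda,\tilde\pieces)$ with gluing $\tilde\phi$ realize $\tilde N$; by hypothesis there is a natural graph morphism $\tilde\Lambda \to \Lambda$ sending each vertex $\tilde v$ to the vertex $v$ whose piece $J_v$ is covered by $\tilde J_{\tilde v}$ (homeomorphically or via the ordinary-fiber-centralizer double cover), and each edge $\tilde e$ to the edge $e$ whose cutting torus or Klein-bottle is covered by the corresponding object over $\tilde e$. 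The key observation is that the quadratic form $\qf_{\tilde J}$ on $H_1(\partial\tilde J;\RR)$ is the pullback of $\qf_J$ under the covering map: for the atoroidal pieces this is because a finite cover of a cusped hyperbolic manifold induces, on each cusp torus, a finite covering of flat tori, under which ``shortest geodesic has length $1$'' is preserved after rescaling, so the rescaled Euclidean inner product pulls back; for the Seifert-fibered pieces it is because the covering is compatible with the fibrations, so divisibility in the base orbifold group is preserved (or, in the double-cover case, scaled uniformly by $1$) under the induced map of peripheral subgroups. Consequently $\qf_{\tilde\phi}$ on $H_1(\partial\tilde\pieces;\RR)$ is the pullback of $\qf_\phi$ on $H_1(\partial\pieces;\RR)$ along the map induced by the covering of boundaries.

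With this in hand I would run through Definitions \ref{edgeDistortion}, \ref{vertexDistortion}, \ref{primaryDistortion} and check that each local distortion is unchanged. For an entire-edge $\tilde e$ over $e$: if $T_{\tilde\delta} \to T_\delta$ is a homeomorphism then the lattice $H_1(T_{\tilde\delta};\ZZ)$ maps isomorphically to $H_1(T_\delta;\ZZ)$ and the form is pulled back, so the discriminant and hence $\distortion_{\tilde e}(\tilde\phi) = \distortion_e(\phi)$; if $T_{\tilde\delta} \to T_\delta$ is a genuine $k$-fold covering of tori, then the sublattice $H_1(T_{\tilde\delta};\ZZ)$ has index $k$ in $H_1(T_\delta;\ZZ)$ and the pulled-back form has discriminant multiplied by $k^2$ — so a priori the edge distortion could change. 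Here I would invoke the structure of the geometric decomposition: the cutting tori of $\tilde N$ map to cutting tori of $N$, and because the covering was assumed to restrict to a homeomorphism or to the specific ordinary-fiber double cover on each piece, the induced maps on the shared boundary tori are determined compatibly from both adjacent pieces; in the double-cover-over-a-non-orientable-base case the relevant torus either lifts homeomorphically or the semi-edge of $\Lambda$ becomes an entire-edge of $\tilde\Lambda$ with two ends mapping homeomorphically to $T_\delta$ — in every case the effective lattice used in Definition \ref{edgeDistortion} is the right one, and the discriminant's fourth root is preserved. For an entire-vertex $\tilde v$ over an entire-vertex $v$ covered homeomorphically, the isomorphism $H_1(\partial\tilde J_{\tilde v};\ZZ) \cong H_1(\partial J_v;\ZZ)$ carries $\partial_* H_2(\tilde J_{\tilde v},\partial\tilde J_{\tilde v};\ZZ)$ onto $\partial_* H_2(J_v,\partial J_v;\ZZ)$ and intertwines the forms, and since the valence $n_{\tilde v} = n_v$ as well, the exponent $\frac{1}{2n_v}$ matches and $\distortion_{\tilde v}(\tilde\phi) = \distortion_v(\phi)$. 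For a vertex $\tilde v$ over a semi-vertex $v$ covered by the ordinary-fiber double cover, note that $\tilde J_{\tilde v}$ is precisely the manifold called $\tilde J_v$ in Definition \ref{vertexDistortion}, and $\tilde\qf_\phi$ there is by construction the pullback form; since an entire-vertex over a semi-vertex has valence $n_{\tilde v} = 2 n_v$, the exponent used for $\tilde v$ is $\frac{1}{2 \cdot 2 n_v} = \frac{1}{4 n_v}$, which is exactly the exponent prescribed for a semi-vertex in Definition \ref{vertexDistortion}; hence again $\distortion_{\tilde v}(\tilde\phi) = \distortion_v(\phi)$. (If $\tilde v$ over a semi-vertex $v$ is itself again a semi-vertex, i.e.\ the non-orientable base persists in the cover, the double-cover construction is the same manifold on both sides and the identity is immediate.)

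Finally, since every vertex and edge of $\tilde\Lambda$ lies over some vertex or edge of $\Lambda$ with equal local distortion, and since the map $\tilde\Lambda \to \Lambda$ is onto on vertices and edges, the two sets of local distortions coincide, so their maxima agree: $\distortion(\tilde N) = \distortion_{\tilde\Lambda}(\tilde\phi) = \distortion_\Lambda(\phi) = \distortion(N)$. The main obstacle I anticipate is the bookkeeping at semi-vertices and semi-edges — keeping straight how valences, exponents, and the double-cover constructions in Definition \ref{vertexDistortion} interact when a semi-vertex either lifts to an entire-vertex (valence doubled) or remains a semi-vertex, and correspondingly when a semi-edge lifts to an entire-edge or to two semi-edges; one must verify in each branch that the fourth-root/exponent normalization was chosen precisely so that these lifts preserve the invariant, which is indeed the point of the ``primary average'' normalization. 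The hyperbolic-cusp claim (that rescaling to shortest-geodesic-length-one commutes with passing to a finite cover of flat tori) is routine once one notes the cover of a flat torus is flat with the pulled-back metric, so no real difficulty there.
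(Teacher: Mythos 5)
Your proposal is correct and takes essentially the same approach as the paper, which dismisses the lemma as an immediate consequence of the definitions via the induced orbi-graph covering $\tilde\Lambda\to\Lambda$; you have simply supplied the bookkeeping the paper leaves implicit — the pullback of $\qf_J$ on each piece, the fact that the boundary restriction of the centralizer double cover is a trivial double cover so every $T_{\tilde\delta}\to T_\delta$ is a homeomorphism, and the matching of valences and exponents at semi-vertices and semi-edges with the normalizations built into Definitions \ref{edgeDistortion} and \ref{vertexDistortion}.
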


\begin{proof} This follows immediately from definition.
We observe that every preimage component of a geometric decomposition torus or Klein bottle of $N$
must be a geometric decomposition torus or Klein bottle of $\tilde{N}$,
(Klein bottles possibly covered by tori).
One way to see this is to write $N$ as $N_\phi$, obtained from a gluing
from preglue graph of geometrics $\phi\in\Phi(\Lambda,\pieces)$.
Then $\tilde{N}$ is obtained from the lifted gluing $\tilde{\phi}\in\Phi(\tilde{\Lambda},\tilde{\pieces})$.
Here $\tilde{\pieces}$ is taken as the disjoint union of the preimage components of $\pieces$, and
$\tilde{\Lambda}$ is determined by the adjacency relation in $\tilde{N}$ of 
the components of $\tilde{\pieces}$.
If $\tilde{\phi}$ was degenerate at some entire or semi edge,
$\phi$ would also be degenerate at the underlying edge, 
(by considering the Seifert fibered piece of $\tilde{N}$ that
contains the corresponding torus or Klein bottle).
This means that the induced map between graphs $\tilde\Lambda\to\Lambda$ 
is covering in orbi-space sense.
It is also clear that the orbi-graph covering
$\tilde\Lambda\to\Lambda$ has the same degree as $\tilde{N}$ over $N$.
So the average distortions for $\tilde{N}$ at the vertices and along the edges
form the same set of values as those for $N$.
\end{proof}

\section{Distortion and finiteness of gluings}\label{Sec-JSJgluings}
In this section, 
we provide a criterion for finiteness,
namely,
with a prescribed graph of geometrics for the geometric decomposition,
there are only finitely many homeomorphically distinct
orientable closed irreducible $3$--manifolds
for which the primary distortion is bounded by a given constant.
This is by definition a restatement of the following proposition:

\begin{proposition}\label{finiteGluings} Let $(\Lambda,\pieces)$ be a preglue graph of geometrics. 
For any constant $C>0$, 
there are at most finitely many distinct equivalence classes of nondegenerate gluings 
$\phi\in\Phi(\Lambda,\pieces)$ with the property $\distortion_\Lambda(\phi)<C$.
\end{proposition}

We prove Proposition \ref{finiteGluings} in the rest of this section. 
Our strategy is as follows. 
Using distortions along the edges, 
we bound the allowable gluings up to fiber shearings
(see Definition \ref{fiberShearing}).
Using the distortions at the Seifert fibered vertices, 
we bound the allowable indices of the needed fiber shearings.
So the allowable gluings are bounded up to equivalence,
which proves Proposition \ref{finiteGluings}.
In particular,
the distortions at the atoroidal vertices 
are not used in our proof.
This is because the distortion at any atoroidal vertex 
can be bounded in terms of the distortions along its adjacent edges,
as we explain in Subsection \ref{Subsec-atorVertexDistortion}.

\subsection{Fiber shearings}\label{Subsec-fiberShearing}
We introduce an operation 
on the space of gluings called fiber shearing. 
It modifies the resulting $3$--manifold 
by a surgery on an ordinary fiber 
of a Seifert fibered piece,
which preserves the base $2$--orbifold.

Recall that for an oriented torus $T$ 
and any slope $\gamma$ of $T$, 
the (right-hand) \emph{Dehn-twist}
along $\gamma$ is (the isotopy class of)
an orientation-preserving self-homeomorphism 
$D_\gamma\colon T\to T$ 
with the property 
$D_{\gamma*}(\zeta)=\zeta+\langle\zeta,[\gamma]\rangle\gamma$ 
for all $\zeta\in H_1(T;\ZZ)$.
Here a slope of a torus is considered to be
to be an oriented essential simple closed curve up to isotopy,
and the notation $\langle\cdot,\cdot\rangle$
stands for the intersection pairing. 
Note that $D_{\gamma}$ does not depend on the direction of $\gamma$.
For any integer $k$, the \emph{$k$--time Dehn twist} along $\gamma$
refers to the iterated homeomorphism $D^k_\gamma\colon T\to T$.

\begin{definition}\label{fiberShearing}
Let $(\Lambda,\pieces)$ be a preglue graph of geometrics. 
A \emph{fiber shearing} with respect to $(\Lambda,\pieces)$
is defined to be a transformation $\tau\in\Mod(\partial\pieces)$
with the following properties.
For any vertex $v\in\vrtx(\Lambda)$
and any edge end $\delta\in\widetilde\edge(\Lambda)$ adjacent $v$, 
the restricted transformation $\tau_\delta\in\Mod(T_\delta)$ 
is the identity if $J_v$ is atoroidal; 
or it is a $k_\delta$--time Dehn twist along an ordinary fiber,
for some integer $k_\delta$, if $J_v$ is Seifert fibered. 
The \emph{index} of $\tau$ at a Seifert fibered vertex $v$
is defined to be the integer
$$k_v(\tau)=\sum_{\delta\in\widetilde\edge(v)}k_\delta,$$
where $\widetilde\edge(v)$ stands for the set of the edge ends adjacent to $v$. 
For any gluing $\phi\in\Phi(\Lambda,\pieces)$, 
the transformed gluing $\phi^\tau\in\Phi(\Lambda,\pieces)$
is called the \emph{fiber shearing} of $\phi$ under $\tau$.
\end{definition}

Note that the fiber shearings form an abelian subgroup 
of $\Mod(\partial\mathcal{J})$.
The index is additive for compositions of fiber shearings.

\begin{lemma}\label{sameIndex} 
Two fiber shearings of a given gluing are equivalent if
their indices are equal at every Seifert fibered vertex.
\end{lemma}

\begin{proof} 
It suffices to show that a fiber shearing of index $0$
at all the Seifert fibered vertices preserves the equivalence class of
a gluing. In fact, for any pair of boundary tori $T,T'$ 
of a Seifert fibered piece $J$,
there is a properly embedded annulus $A$ 
bounded by a pair of ordinary fibers, one on each.
Since $J$ is oriented,
$A$ is two-sided, and there is a well defined 
Dehn twist on $J$ along this annulus.
The restriction of the Dehn twist along $A$
give rise to a $(+1)$--time Dehn twist on $T$ and a $(-1)$--time Dehn twist
on $T'$.
It follows that any fiber shearing of index $0$ at a Seifert fibered vertex
can be extended to be 
an orientation-preserving homeomorphism of the corresponding Seifert fibered piece,
(as the composition of some Dehn twists along annuli).
Therefore, 
any fiber shearing of index $0$ at all the Seifert fibered vertices
transforms any gluing into an equivalent one.
\end{proof}

\subsection{Distortion along edges}\label{Subsec-edgeDistortion}
We show that the distortions along the edges bound 
the nondegenerate allowable gluings up to fiber shearings
(Lemma \ref{upToFiberShearings}). 
In fact, we prove a more general result of finiteness,
for twisted sums of positive semidefinite quadratic forms (Proposition \ref{qfLemma}).
The rank--$2$ case of Proposition \ref{qfLemma} 
is sufficient for Lemma \ref{upToFiberShearings}.
It is probably a lot simpler 
to prove only for that particular rank.
However, we decide to present a proof for arbitrary rank,
as it makes the underlying structures more explicit.

%We first mention an easy fact in linear algebra.
%
%\begin{lemma}\label{CB} Let $V$ be a free $\ZZ$-module of finite rank $n>0$,
%and $\qf$ be a positive-definite quadratic form on $V_\RR=V\otimes_\ZZ\RR$.
%For any $C>0$, and any integer $0\leq k\leq n$, there are
%at most finitely many rank-$k$ submodules $W$ of $V$ with the
%discriminant $\Delta(W,\qf)<C$. \end{lemma}
%
%\begin{proof} Fix a basis $e_1,\cdots,e_n$ of $V$. It suffices to prove for the Euclidean form $\qf_0$
%induced by the fixed basis as an orthonormal basis, since the nondegeneracy ensures $\Delta(W,\qf_0)<\lambda\cdot\Delta(W,\qf)$ for some $\lambda>0$
%depending only $\qf$. Note that rank-$k$ submodules of $V$
%are in bijection with rank-$1$ submodules of $\wedge^k V$, 
%represented by primitive elements $w\in\wedge^k V$ up to sign. As $\wedge^k V$
%has a natural inner product with a standard orthonormal basis $\{e_{i_1}\wedge\cdots\wedge e_{i_k}\,|\,1\leq i_1<\cdots<i_k\leq n\}$, 
%for any $\pm w\in\wedge^k V$ representing $W$, the well-known Cauchy-Binet formula implies:
%$$\Delta(W,\qf_0)=\norm{w}^2,$$
%where $\norm{\cdot}$ is the norm induced from the inner product structure.
%As $w$ is an integral linear combination of the basis vectors, there are at most finitely many primitive $w$'s satisfying $\norm{w}<C$. \end{proof}

For any finitely generated free
$\ZZ$--module $V$,
the special linear group $\SL(V)$ acts
naturally (from the right) on the space of 
quadratic forms on the real vector space $V_\RR=V\otimes_\ZZ\RR$,
and any $\tau\in\SL(V)$ transforms a
quadratic form $\qf$ into the composition $\qf\tau$. 
%We write the stabilizer of $\qf$ in $\Gamma$ as $\Gamma_\qf$.
We say
that a quadratic form $\qf$ has \emph{rational kernel} with respect to $V$
if the kernel $U_\RR$ of (the associated symmetric bilinear form of) $\qf$ 
in $V_\RR$ intersects $V$ in a lattice $U$ of $U_\RR$.
%In particular, under this assumption, 
%we have $U_\RR=U\otimes_\ZZ\RR$.

\begin{proposition}\label{qfLemma} 
Let $V$ be a finitely generated free $\ZZ$--module,
and $\qf,\qf'$ be two real positive semidefinite
quadratic forms on $V_\RR$ 
having rational kernels with respect to $V$. 
Denote by $\Gamma$ the special linear group $\SL(V)$,
and by $\Gamma_\qf$ and $\Gamma_{\qf'}$ 
the stabilizers for $\qf$ and $\qf'$, respectively.

Then for any transformation $\sigma\in\Gamma$,
the discriminant $\Delta(V,\qf\sigma+\qf')$ 
depends only on the double coset $\Gamma_\qf\sigma\Gamma_{\qf'}$.
Moreover, given any constant $C>0$, there are 
at most finitely many distinct double cosets $\Gamma_\qf\sigma\Gamma_{\qf'}$
of $\Gamma$ for which the following inequality holds:
$$0<\Delta(V,\qf\sigma+\qf')<C.$$
\end{proposition}

\begin{proof}
Observe that 
$\Delta(V,\qf\tau\sigma\tau'+\qf')=\Delta(V,\qf\tau\sigma+\qf'(\tau')^{-1})=\Delta(V,\qf\sigma+\qf')$
holds for any $\tau\sigma\tau'\in\Gamma_\qf\sigma\Gamma_{\qf'}$,
so the double coset $\Gamma_\qf\sigma\Gamma_{\qf'}$
determines the discriminant $\Delta(V,\qf\sigma+\qf')$.

To prove the finiteness, we consider the unit ball $B_\sigma$
of the quadratic form $\qf\sigma+\qf'$, which depends only on
the right coset $\Gamma_\qf\sigma$.
%Observe that $B_\sigma$ 
%is contained by the intersection $\sigma^{-1}(B)\cap B'$,
%where $B$ and $B'$ 
%denote the unit balls of $\qf$ and $\qf'$,
%respectively.
The condition $\Delta(V,\qf\sigma+\qf')>0$ guarantees that
$\qf\sigma+\qf'$ is positive definite, so $B_\sigma$ must be compact.
(Note that the unit balls $B,B'$ of $\qf,\qf'$ may be non-compact;
see Figure \ref{figDistortionQF}.)
Under the assumption $0<\Delta(V,\qf\sigma+\qf')<C$,
we show for any compact subset $K$ of $V_\RR$
that $B_\sigma$ is contained by $K$
for at most finitely many right cosets $\Gamma_\qf\sigma$.
Then we show that 
there exists some compact subset $K=K(C,\qf,\qf')$ 
with the following property:
Every double coset $\Gamma_\qf\sigma\Gamma_{\qf'}$ with $0<\Delta(V,\qf\sigma+\qf')<C$
contains a right-coset representative 
$\Gamma_\qf\sigma\tau'$ for which $B_{\sigma\tau'}$ is contained by $K$.
%Then we show that there are 
%at most finitely many balls contained by $K$ 
%arising this way, 
%and each corresponding to a unique right coset $\Gamma_\qf\sigma$.
%This implies the finiteness of double cosets $\Gamma_\qf\sigma\Gamma_\qf'$
%under the assumption $0<\Delta(V,\qf\sigma+\qf')<C$.
To this end, we need to understand 
the action of $\Gamma_{\qf'}$ on $V_\RR$
and the transformation geometry about $B_\sigma$.

\begin{figure}[htb]
\centering
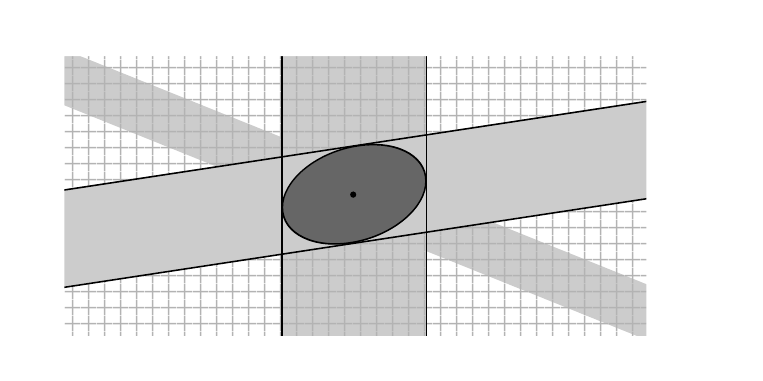
\caption{The unit balls, illustrated with $\dim V_\RR=2$ and
$\dim U_\RR=\dim U'_\RR=1$}\label{figDistortionQF}
\end{figure}

Denote by $U'_\RR$ the kernel of $\qf'$ in $V_\RR$, and by $U'$ 
the intersection $U'_\RR\cap V$. The assumption that $\qf'$ has rational kernel 
implies that $V/U'$ is embedded as a lattice of $V_\RR/U'_\RR$.
Every transformation $\tau'\in\Gamma_{\qf'}$ preserves $U'$ and descends
to an automorphism of $V/U'$ that preserves the induced positive definite quadratic form
$\overline{\qf'}$.
The stabilizer is isomorphic to 
a semidirect product of groups 
$\Gamma_{\qf'}\cong\mathrm{Hom}_{\ZZ}(V/U',U')\rtimes\left(\mathrm{GL}(U')\times\mathrm{O}\left(V/U',\overline{\qf'}\right)\right)^+$
where $\mathrm{GL}(U')\times\mathrm{O}(V/U',\overline{\qf'})$ acts on 
the abelian group $\mathrm{Hom}_{\ZZ}(V/U',U')$ by composition on both sides,
and $(\mathrm{GL}(U')\times\mathrm{O}(V/U',\overline{\qf'}))^+$ stands for 
the normal subgroup whose elements have determinant product $1$.
To be more explicit, fix a $\ZZ$--submodule $L'$ of $V$ which projects isomorphically onto
$V/U'$.
With respect to the direct-sum decomposition $V=U'\oplus L'$,
any element $\tau'\in\Gamma_{\qf'}$ is represented by a square matrix
\begin{equation*}\label{matrix_form}
\left[\begin{array}{cc}*&*\\0&*\end{array}\right]\in
\left[\begin{array}{cc} \mathrm{GL}(U') & \Hom_{\ZZ}(L',U')\\
0& \mathrm{O}\left(L',\qf'|_{L'}\right)\end{array}\right]
\end{equation*}
and of determinant $1$.
The entries here are linear homomorphisms, 
which become $\ZZ$--matrix blocks 
upon a choice of bases for $U'$ and $L'$.

The geometry of $B_\sigma$ can be understood
through its intersection with cosets of $U'_\RR$.
For any vector $\xi\in V_\RR$, the coset $\xi+U'_{\RR}$ intersects $B_\sigma$ 
in an ellipsoid $B_\sigma(\xi)$, possibly a point or empty. 
The nondegenerate ellipsoids $B_\sigma(\xi)$ are all of the same dimension as $U'_\RR$,
and all similar to  
$$B_\sigma(0)=B_\sigma\cap U'_\RR,$$
shrinking the scale. 
The affine centers of $B_\sigma(\xi)$
all lie on a unique linear subspace $H_\sigma$ of $V_\RR$, 
which is precisely the dual to $U'_\RR$ with respect to $\qf\sigma+\qf'$.
As $U'_\RR$ is the kernel of $\qf'$,
the vectors $\eta\in H_\sigma$ can be characterized 
by the property $\qf(\sigma\xi+\sigma\eta)=\qf(\sigma\xi)+\qf(\sigma\eta)$
for all $\xi\in U'_\RR$.
As $H_\sigma$ is a direct-sum complement of $U'_\RR$, 
it determines a unique homomorphism
\begin{equation*}\label{h_sigma}
h_\sigma\in\mathrm{Hom}_\RR(L'_\RR,U'_\RR)
\end{equation*}
whose graph in $V_\RR=U'_\RR\oplus L'_\RR$ equals $H_\sigma$.
Moreover, we observe the inclusion
$$B_\sigma\subset B'\cap (B_\sigma(0)+H_\sigma)$$
where $B'$ stands for the unit ball of $\qf'$ in $V_\RR$,
and $B_\sigma(0)+H_\sigma$ the convex subset of $V_\RR$
formed by adding up pairs of vectors from $B_\sigma(0)$ and $H_\sigma$.

Fix an auxiliary basis of $V$ by taking
a pair of bases for $U'$ and $L'$.
We endow $V_\RR=U'_\RR\oplus L'_\RR$ 
with an auxiliary Euclidean metric 
by requiring the fixed basis to be orthonormal.
Denote by $n$ the dimension of $V_\RR$,
and $k'$ the dimension of $U'_\RR$.
Denote by $\mu_m$ the induced Lebesgue measure
on any $m$--dimensional affine subspace of $V_\RR$.
The above inclusion implies
$$\mu_n(B_\sigma)\,\leq\,\mu_{k'}(B_\sigma(0))\cdot\mu_{n-k'}(B'\cap L'_\RR),$$
since $B'\cap (B_\sigma(0)\times H_\sigma)$ projects orthogonally onto
a subset of $B'\cap L'_\RR$ with the fibers isometric to $B_\sigma(0)$.
The volume of $B_\sigma$ can be computed by the formula
$$\mu_n(B_\sigma)\,=\,\frac{\omega_n}{\Delta(V,\qf\sigma+\qf')^{1/2}},$$
where $\omega_n=\pi^{n/2}/\Gamma(n/2+1)$ denotes the volume of an $n$--dimensional
Euclidean unit ball.
Therefore, the assumption 
$0<\Delta(V,\qf\sigma+\qf')<C$ implies
a uniform lower bound for the volume of $B_\sigma(0)$:
$$\mu_{k'}(B_\sigma(0))>\frac{\omega_n}{\mu_{n-k'}(B'\cap L'_\RR)}\cdot C^{-1/2}>0.$$

Suppose that $K$ is any compact subset of $V_\RR$.
We claim that there are at most finitely many right cosets $\Gamma_\qf\sigma$
for which $B_\sigma$ are contained by $K$,
provided $0<\Delta(V,\qf\sigma+\qf')<C$.
In fact, an ellipsoid $B_\sigma$ could be contained by $K$ 
only if the length of its shortest axis is uniformly bounded below,
since its volume must be at least $\omega_n\cdot C^{-1/2}$.
Note that $\qf\sigma+\qf'$, and hence $B_\sigma$,
is determined by the homomorphism
$\overline{\sigma}\colon V\to V/U$,
defined by $\overline{\sigma}(\xi)=\sigma(\xi)\bmod U$
where $U$ stands for the kernel of $\qf$ on $V$.
Then uniform shortest-axis estimate implies that any $\xi\in V$
admits at most finitely many possible images $\overline{\sigma}(\xi)$.
Applying to our fixed basis of $V$, we see that there are 
at most finitely many possibilities for $B_\sigma$ 
if it is contained by $K$,
so the claim follows.

It remains to argue that there exists a uniform constant $D>0$,
depending only on $C,\qf,\qf'$ and the fixed data, and
that the following statement holds true
for the Euclidean compact ball $K(D)$ of radius $D$
centered at the origin of $V_\RR$:
For any double coset $\Gamma_\qf\sigma\Gamma_{\qf'}$
with $0<\Delta(V,\qf\sigma+\qf')<C$,
there is a right-coset representative $\Gamma_\qf\sigma\tau'$,
and $B_{\sigma\tau'}$ is contained by $K(D)$.

Denote by $U_\RR$ the kernel of $\qf$ in $V_\RR$,
and $U$ the intersection $U_\RR\cap V$, and
$\overline{\qf}$ the induced positive definite quadratic form
on $V_\RR/U_\RR$.
Since $\qf\sigma+\qf'$ is positive definite,
 $\sigma(U')$ projects isomorphically onto its image 
$\overline{\sigma(U')}$ in the quotient $V/U$.
Then the discriminant $\Delta(\overline{\sigma(U')},\overline{\qf})$
is uniformly bounded above,
by our lower bound estimate for $\mu_{k'}(B_\sigma(0))$
and by the formula 
$$\mu_{k'}(B_\sigma(0))\,=\,
\frac{\omega_{k'}}{\Delta(U',\qf\sigma+\qf')^{1/2}}\,=\,
\frac{\omega_{k'}}{\Delta(\sigma(U'),\qf)^{1/2}}
\,=\,
\frac{\omega_{k'}}{\Delta(\overline{\sigma(U')},\overline{\qf})^{1/2}}.
$$
Since $\overline{\qf}$ is positive definite,
it is well-known that 
there are at most finitely many rank--$k'$ submodules of $V/U$
whose $\overline{\qf}$--discriminants are bounded by a given constant.
(This is implied by the simple facts that
$W\mapsto \wedge^{k'}W$ gives rise to a finite-to-one correspondence
between the rank--$k'$ submodules of $V/U$ and the rank--$1$ submodules of $\wedge^{k'}(V/U)$,
and that the expression
$\overline{\mathfrak{q}}_{k'}(\overline{v}_1\wedge\cdots\wedge \overline{v}_{k'})
=\Delta(\ZZ \overline{v}_1+\cdots+\ZZ \overline{v}_{k'},\overline{\qf})$
determines a unique and well-defined 
positive definite quadratic form $\overline{\qf}_{k'}$ on $\wedge^{k'}(V/U)$.)
%By Lemma \ref{CB}, 
Therefore, at most finitely many rank--$k'$ submodules
of $V/U$ may occur to be the image $\overline{\sigma(U')}$.
If two transformations $\sigma,\sigma'\in\Gamma$ give rise
to the same submodule $\overline{\sigma(U')}=\overline{\sigma'(U')}$,
the identification pulls back to be a unique isomorphism
$\tau_{U'}\in\mathrm{GL}(U')$.
Then any transformation $\tau'\in \Gamma_{\qf'}$ whose $(U',U')$--block
(of the matrix representation) equals $\tau_{U'}$
satisfies $\qf\sigma\tau'+\qf'=\qf\sigma'+\qf'$ restricted to $U'$,
or equivalently, $B_{\sigma\tau'}(0)=B_{\sigma'}(0)$.
It follows that there exist a finite collection of right cosets
$$\mathcal{S}\subset \Gamma_\qf\backslash\Gamma$$
with the following property:
For any double coset $\Gamma_{\qf}\sigma\Gamma_{\qf'}$ with $0<\Delta(V,\qf\sigma+\qf')<C$,
there are some right-coset representative $\Gamma_{\qf}\sigma\tau'$
with $\tau'\in\Gamma_{\qf'}$ and some $\Gamma_{\qf}\sigma'\in\mathcal{S}$
such that $B_{\sigma\tau'}(0)$ coincides with $B_{\sigma'}(0)$.

Furthermore, the compactness of $\mathrm{Hom}_\RR(L'_\RR,U'_\RR)/\mathrm{Hom}_\ZZ(L',U')$
enables us to strengthen the above statement: 
It can be required in addition that
$h_{\sigma\tau'}(B'\cap L'_\RR)=B'\cap H_{\sigma\tau'}$ 
is contained by some uniformly bounded neighborhood of 
$h_{\sigma'}(B'\cap L'_\RR)=B'\cap H_{\sigma'}$.
In fact, with respect to our fixed basis, we can modify 
the $(U',L')$--block of $\tau'$
so that $h_{\sigma\tau'}$ differs from 
$h_{\sigma}$ only by a $k'\times(n-k')$--matrix 
whose entries are all bounded by $1/2$ in the absolute value.
Note that modifying the $(U',L')$--block of the matrix of $\tau'$
changes $h_{\sigma\tau'}$ 
by some difference in $\mathrm{Hom}_\ZZ(L'_\ZZ,U'_\ZZ)$,
without affecting $B_{\sigma\tau'}(0)$.
Then the asserted neighborhood of $h_{\sigma'}(B'\cap L'_\RR)=B'\cap H_{\sigma'}$
can be taken, for example, to have radius 
$R=\sqrt{k'(n-k')}\cdot\mathrm{Diam}(B'\cap L'_{\RR})/4$.

Because of the finiteness of $\mathcal{S}$ and
the geometry of the ellipsoids $B_\sigma$,
we see that a uniform constant $D>0$ as asserted
can be taken,
for example,
to be the maximum of $R+\mathrm{Diam}(B'\cap H_{\sigma'})/2+\mathrm{Diam}(B_{\sigma'}(0))/2$
for all $\Gamma_\qf\sigma'\in\mathcal{S}$.
So the proof of Proposition \ref{qfLemma} is complete.
\end{proof}

\begin{lemma}\label{upToFiberShearings} 
Let $(\Lambda,\pieces)$ be a preglue graph of geometrics. 
Given any constant $C>0$, 
there are at most finitely many nondegenerate distinct gluings 
$\phi\in\Phi(\Lambda,\pieces)$ up to fiber shearings
with the following property:
For all edges $e\in\edge(\Lambda)$,
the distortion along $e$ satisfies $\distortion_e(\phi)<C$.
\end{lemma}

It might be helpful to see what we gain from Lemma \ref{upToFiberShearings},
before going into the proof.
If we denote by $\mathrm{FS}(\pieces)$ the abelian subgroup of $\Mod(\partial\pieces)$ 
formed by all the fiber shearings, 
the distortion along any edge remains constant 
on any $\mathrm{FS}(\pieces)$--orbit in the space of gluings $\Phi(\Lambda,\pieces)$.
Then Lemma \ref{upToFiberShearings} says that uniformly bounded distortion 
along all edges implies finiteness of $\mathrm{FS}(\pieces)$--orbits
whose gluings satisfy such bound.
One will obtain finiteness of equivalence classes of gluings,
as desired for Proposition \ref{finiteGluings},
if one can also bound the equivalence classes of allowable gluings for each $\mathrm{FS}(\pieces)$--orbit,
using the further assumption of uniformly bounded distortion at all vertices.
After choosing a reference gluing of the $\mathrm{FS}(\pieces)$--orbit,
this next step may be reformulated 
in terms of uniformly bounded fiber-shearing indices
at all vertices, 
thanks to Lemma \ref{sameIndex}.
We actually prove the next step in Lemma \ref{boundingFiberShearings}.

\begin{proof} 
Let $\phi\in\Phi(\Lambda,\pieces)$ be any nondegenerate gluing 
with the required property.
For any edge end $\delta\in\widetilde{\edge}(\Lambda)$, 
the restricted homeomorphism $\phi_\delta\colon T_\delta\to T_{\bar\delta}$ 
induces the quadratic form 
$\qf_\phi|=\qf_{J'}\phi_\delta+\qf_{J}$ on $H_1(T_{\delta};\RR)$,
where $J,J'$ are the pieces containing $T_\delta,T_{\bar\delta}$, respectively. 
Fix a reference gluing $\psi_\delta:T_\delta\to T_{\bar\delta}$.
Then $\phi_\delta=\psi_\delta\sigma$ for some $\sigma\in\Mod(T_\delta)$. 
Rewriting $\qf=\qf_{J}$, and $\qf'=\qf_{J'}\psi_\delta$, and $\Gamma=\Mod(T_\delta)$,
we see that $\qf_\phi$ on $H_1(T_{\delta};\RR)$ equals $\qf\sigma+\qf'$ for some $\sigma\in\Gamma$. 
The stabilizer $\Gamma_\qf$ of $\qf$ in $\Gamma$ is nontrivial 
only if $J$ is Seifert fibered, and
in this case, the stabilizer $\Gamma_\qf$
is generated by a Dehn twist along an ordinary fiber on $T_\delta$.
The stabilizer $\Gamma_{\qf'}$ is nontrivial only if $J'$ is Seifert fibered, 
and in this case, $\Gamma_{\qf'}$ is generated by a Dehn twist along an
ordinary fiber on $T_{\bar\delta}$ 
pulled back to $T_\delta$ via $\psi_\delta$. By the assumption and
the definition of edge distortion, 
we have $\Delta(H_1(T_\delta;\ZZ),\qf\sigma+\qf')<C$.
Moreover, $\Delta(H_1(T_\delta;\ZZ),\qf\sigma+\qf')>0$ 
holds because $\phi$ is nondegenerate.
Therefore,
Proposition \ref{qfLemma} implies 
that there are at most finitely many allowable types of 
$\phi_\delta$ up to fiber shearings. 
Since $\phi\colon \partial\pieces\to
\partial\pieces$ is defined by all the restricted homeomorphisms $\phi_\delta$,
for all $\delta\in\widetilde{\edge}(\Lambda)$, 
%we conclude 
%there are at most finitely many nondegenerate gluings $\phi$ up to fiber shearings,
%which have edge distortions all bounded by $C$.
the asserted finiteness follows.
\end{proof}

\subsection{Distortion at Seifert fibered vertices}\label{Subsec-sfVertexDistortion}
We show that the distortions at the Seifert fibered vertices bound
the allowable fiber shearings of 
any given nondegenerate gluing up to equivalence.
Then we prove Proposition \ref{finiteGluings}.

\begin{lemma}\label{boundingFiberShearings} 
Let $(\Lambda,\pieces)$ be a preglue graph of geometrics,
and $\phi\in\Phi(\Lambda,\pieces)$ be a nondegenerate gluing. 
Suppose that $v\in\vrtx(\Lambda)$ is a Seifert fibered vertex.
Then given any constant $C>0$, there exists some constant
$K=K(C,\phi)>0$ with the following property:
For any fiber shearing $\phi^\tau$ of $\phi$ 
whose distortion at $v$ satisfies $\distortion_v(\phi^\tau)<C$, 
the fiber-shearing index $k_v(\tau)$ satisfies $|k_v(\tau)|<K$.
\end{lemma}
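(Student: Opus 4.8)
The plan is to understand how a fiber-shearing of index $k=k_v(\tau)$ affects the relevant discriminant at $v$, and to show that this discriminant grows (at least) quadratically in $k$, so that a bound $\distortion_v(\phi^\tau)<C$ forces $|k|<K$. First I would reduce to the case where $v$ is an entire-vertex: if $v$ is a semi-vertex, replace $J_v$ by its orientation double cover $\tilde J_v$ and work with $\tilde\qf_\phi$ and $\partial_*H_2(\tilde J_v,\partial\tilde J_v;\ZZ)$; a fiber-shearing of $\phi$ of index $k$ at $v$ lifts to a fiber-shearing of the induced gluing with index a fixed multiple of $k$ on the cover, so the two statements are equivalent up to a harmless constant. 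Thus assume $v$ is entire, with boundary tori $T_{\delta_1},\dots,T_{\delta_{n_v}}$ and ordinary-fiber classes $\varphi_i\in H_1(T_{\delta_i};\ZZ)$, and set $L=\partial_*H_2(J_v,\partial J_v;\ZZ)\subset\bigoplus_i H_1(T_{\delta_i};\RR)$, a lattice of rank $n_v$ (it has rank $n_v$ because for a Seifert-fibered $J_v$ the map $H_2(J_v,\partial J_v;\QQ)\to H_1(\partial J_v;\QQ)$ has image of half-dimension, and $\partial J_v$ has $2n_v$-dimensional $H_1$).

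The key computation is the following. Writing $q=\qf_\phi$ on $H_1(\partial\pieces;\RR)$, a fiber-shearing $\tau$ with twist integers $k_{\delta_i}$ along the $\varphi_i$ changes $\phi$ to $\phi^\tau=\tau^{-1}\phi\tau$, and hence changes $q$ restricted near $T_{\delta_i}$ by a term that, evaluated on a class $\zeta$ with intersection number $a_i=\langle\zeta,\varphi_i\rangle$, picks up a contribution controlled by $k_{\delta_i} a_i$ times $\qf$ of the fiber on the opposite side. Crucially, because $\phi$ is nondegenerate, on the far side of each $\delta_i$ the class $\phi_{\delta_i}(\varphi_i)$ is not the ordinary fiber of the adjacent piece (if that piece is Seifert-fibered) and in any case $\qf_{\bar J}(\phi_{\delta_i}(\varphi_i))>0$ — the fibers don't match — so the quadratic form on $H_1(T_{\delta_i};\RR)$ acquires genuinely new positivity in the $\varphi_i$-direction of size comparable to $k_{\delta_i}^2$. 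Restricting the new form $\qf_{\phi^\tau}$ to the lattice $L$ and taking discriminant: since $L$ is a rank-$n_v$ lattice meeting each summand $H_1(T_{\delta_i};\RR)$ nontrivially in a way that projects nontrivially to the $\varphi_i$-direction (again using nondegeneracy — $L$ cannot lie entirely in the span of the $\varphi_i$'s, as $\qf_\phi$ is positive-definite on $L$ with bounded-below discriminant for the reference gluing), the discriminant $\Delta(L,\qf_{\phi^\tau})$ is bounded below by a positive constant times $\big(\sum_i k_{\delta_i}\big)^2 = k_v(\tau)^2$ — more precisely, by the Cauchy–Binet / Hadamard-type expansion one isolates the leading term in the $k_{\delta_i}$ and checks that, after accounting for the annulus relations (Lemma \ref{sameIndex}: only the sum $k_v(\tau)$ matters, so we may take all twists concentrated on one end), it grows like $k_v(\tau)^2$. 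Then $\distortion_v(\phi^\tau) = \Delta(L,\qf_{\phi^\tau})^{1/(2n_v)} \geq c^{1/(2n_v)}|k_v(\tau)|^{1/n_v}$, so $\distortion_v(\phi^\tau)<C$ gives $|k_v(\tau)| < (C^{2n_v}/c) =: K$.

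The main obstacle, and the point requiring care, is making the lower bound $\Delta(L,\qf_{\phi^\tau})\geq c\,k_v(\tau)^2$ precise — i.e., verifying that the shearing really does force growth in a direction that $L$ sees, rather than a direction tangent to the kernel of $\qf_{J_v}$ (the fiber direction on the $J_v$-side, where $\qf_{J_v}$ vanishes). This is exactly where nondegeneracy of $\phi$ enters twice: it guarantees the opposite-side form $\qf_{\bar J}$ is positive on $\phi_{\delta_i}(\varphi_i)$, so the fiber direction on $T_{\delta_i}$ is not in the kernel of $\qf_\phi$ at all; and it guarantees $L$ projects onto the relevant quotient. I would set this up by choosing, for each $i$, a basis of $H_1(T_{\delta_i};\ZZ)$ adapted so $\varphi_i$ is a basis vector, write $\qf_{\phi^\tau}$ in block form across the $T_{\delta_i}$, and observe that the sub-block in the $\varphi_i$ coordinates of $\qf_{\phi^\tau}$ equals $\qf_\phi$ in those coordinates plus a positive-semidefinite perturbation whose $\varphi_i$-entry is $k_{\delta_i}^2\,\qf_{\bar J}(\phi_{\delta_i}(\varphi_i))$ up to lower order (using Lemma \ref{sameIndex} to normalize). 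Combining over $i$ and invoking Lemma \ref{sameIndex} once more to reduce to a single free parameter $k_v(\tau)$, a direct minor expansion of $\det$ yields the quadratic lower bound, completing the proof.
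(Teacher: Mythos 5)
Your overall strategy is the same as the paper's: reduce the semi-vertex case to the entire case via the orientation double cover, and show that the discriminant of $\qf_{\phi^\tau}$ on $\partial_*H_2(J_v,\partial J_v;\ZZ)$ grows quadratically in the index $k_v(\tau)$, with nondegeneracy of $\phi$ supplying the positive lower bound $\qf_{J'}(\phi_\delta(\lambda_\delta))>0$ on the opposite-side evaluation of the fiber. The use of Lemma \ref{sameIndex} to concentrate all twists on one end is a legitimate normalization (equivalent gluings induce the same quadratic form and hence the same distortion) and is a reasonable simplification, though the paper does not need it.

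However, the core estimate is left unverified, and the sketch as written has two problems. First, a direction confusion: you say the shearing produces ``new positivity in the $\varphi_i$-direction,'' but in fact $\qf_{\phi^\tau}(\lambda_\delta)=\qf_\phi(\lambda_\delta)$ is unchanged because a Dehn twist along the fiber fixes the fiber class; the quadratic growth occurs in directions \emph{transverse} to the fiber on $T_\delta$, i.e.\ on classes $\zeta$ with $\langle\zeta,\lambda_\delta\rangle\neq0$. Second, and more seriously, the claim $\Delta(L,\qf_{\phi^\tau})\geq c\,k_v(\tau)^2$ is not established by a ``direct minor expansion'': in the Gram matrix the diagonal entry $\qf^\tau([\mu_v])$ grows like $k^2$, but the off-diagonal entries pairing $[\mu_v]$ against the fiber lattice $L_v$ also grow linearly in $k$ (via the polarization of the perturbation), and the cofactor expansion of $\det$ therefore produces competing $O(k^2)$ terms whose cancellation you have not ruled out. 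The paper sidesteps this by first exhibiting the explicit decomposition
$\partial_*H_2(J_v,\partial J_v;\ZZ)=L_v\oplus\ZZ[\mu_v]$,
where the rank-$(n_v-1)$ sublattice $L_v=\{\sum l_\delta[\lambda_\delta]:\sum l_\delta=0\}$ lies entirely in the fiber span and is therefore invariant under $\qf^\tau$ (so $\qf^\tau|_{L_v}=\qf|_{L_v}$), and then using the clean factorization
$\Delta(L,\qf^\tau)=\Delta(L_v,\qf)\cdot\inf_{\xi\in L_v\otimes\RR}\qf^\tau([\mu_v]+\xi)$,
which isolates all of the $k$-dependence in the squared orthogonal distance of $[\mu_v]$ to $L_v\otimes\RR$. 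That infimum is then bounded below by $\tfrac{K^2 r_v}{2n_v}-R_v$ (with $r_v=\min_\delta\qf([\lambda_\delta])>0$ by nondegeneracy), using that the coefficients $l_\delta+m_v k_\delta$ must sum to $m_v k_v(\tau)$. Without this factorization, or an equivalent device, your minor-expansion step does not obviously close. One further small slip: your reason that $L$ is not contained in the fiber span (``$\qf_\phi$ is positive-definite on $L$'') proves nothing, since $\qf_\phi$ is positive-definite on the whole of $H_1(\partial J_v;\RR)$ for a nondegenerate gluing; the correct reason is the purely topological computation of $\partial_*H_2$ exhibiting the transverse element $[\mu_v]$.
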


\begin{proof} There are two cases, as $v$ may be entire or semi.

\subsubsection*{Case 1}
The vertex $v$ is entire, so the Seifert fibered piece $J_v$ has an orientable base $2$--orbifold. 

In this case, we choose consistent directions for 
the fibers of $J_v$.
For any edge end $\delta$ adjacent to $v$, 
denote by $\lambda_\delta$ 
the directed ordinary-fiber slope on $T_\delta\subset\partial J_v$.
It suffices to assume the valence $n_v>0$.
There is a canonical submodule of rank $(n_v-1)$
contained by $\partial_*H_2(J_v,\partial J_v;\ZZ)$:
$$L_v=\left\{\sum_{\delta\in\widetilde\edge(v)}l_\delta\,[\lambda_\delta]
\in H_1(\partial J_v;\RR)
\colon\left.
\sum_{\delta\in\widetilde\edge(v)}l_\delta=0\textrm{ and }l_\delta\in\ZZ\right.\right\}.$$
%where the linear combination of $[\lambda_\delta]$ is understood in $H_1(\partial J_v;\RR)$.
Moreover, we can choose an element $[\mu_v]\in\partial_*H_2(J_v,\partial J_v;\ZZ)$ with the property
$$\partial_*H_2(J_v,\partial J_v;\ZZ)=L_v\oplus\ZZ\cdot[\mu_v].$$
For example, take $m_v>0$ to be the least common multiple of the cone-point orders 
of the base $2$--orbifold.
We can choose some directed slope $\mu_\delta\subset T_\delta$
for each $\delta\in\widetilde\edge(v)$, requiring the intersection number
$\langle[\mu_\delta],[\lambda_\delta]\rangle=m_v$ on $H_1(T_\delta;\ZZ)$.
Then one may check that
$$[\mu_v]=\sum_{\delta\in\widetilde\edge(v)}[\mu_\delta]$$
satisfies the above property.

Rewrite $\qf, \qf^\tau$ for $\qf_\phi,\qf_{\phi^\tau}$.
Observe $\qf^\tau=\qf$ restricted to $L_v$. 
We estimate the values of $\qf^\tau$ 
on the coset $[\mu_v]+L_v\otimes\RR$ of $\partial_*H_2(J_v,J_{\partial v};\RR)$.
For any vector $[\xi]=\sum_{\delta\in\widetilde\edge(v)}l_\delta\,[\lambda_\delta]\in L_v\otimes\RR$,
we have
$$\qf^\tau\left([\mu_v]+[\xi]\right)=
\sum_{\delta\in\widetilde\edge(v)}\qf\left([\mu_\delta]+(l_\delta+m_vk_\delta)[\lambda_\delta]\right),$$
where the integer $m_v>0$ is as above, 
and where the integers $k_\delta$ stand for the Dehn-twist numbers
on $T_\delta$ to define $\tau_\delta$ (Definition \ref{fiberShearing}).
If $|k_v(\tau)|\geq K$ holds for some given constant $K>0$,
then there must be some $\delta^*\in\widetilde\edge(v)$ which satisfies
$$|l_{\delta^*}+m_v k_{\delta^*}|\geq K/n_v,$$
thanks to the equation
$$\sum_{\delta\in\widetilde\edge(v)}\,(l_\delta+mk_\delta)=m_vk_v(\tau).$$ 
Then we can estimate:
\begin{eqnarray*}
\qf^\tau\left([\mu_v]+[\xi]\right)&\geq&\qf\left([\mu_{\delta^*}]+(l_{\delta^*}+m_vk_{\delta^*})\,[\lambda_{\delta^*}]\right)\\
&\geq&\frac12\,\qf\left((l_{\delta^*}+m_vk_{\delta^*})[\lambda_{\delta^*}]\right)-\qf\left([\mu_{\delta^*}]\right)\\
&\geq&\frac{K^2r_v}{2n_v^2}-R_v.
\end{eqnarray*}
Here the constants $r_v=\min_{\delta\in\widetilde\edge(v)}\qf([\lambda_{\delta}])$ 
and $R_v=\max_{\delta\in\widetilde\edge(v)}\qf([\mu_{\delta}])$
depend only on $J_v$ and $\phi$. 
Observe $r_v>0$ because $\phi$ is nondegenerate.
Then we can estimate:
$$\distortion_v(\phi^\tau)=\left(\Delta(L_v,\qf)\cdot \inf_{[\xi]\in L_v\otimes\RR}\{\qf([\mu_v]+[\xi])\}\right)^{\frac1{2n_v}}\geq \left(\Delta_{L_v}\cdot\left(\frac{K^2r_v}{2n_v^2}-R_v\right)\right)^{\frac{1}{2n_v}},$$
where $\Delta(L_v,\qf)$ is rewritten as $\Delta_{L_v}$.
Observe $\Delta_{L_v}>0$
because $\phi$ is nondegenerate. 
The above estimates show
that $\distortion_v(\phi^\tau)<C$ implies an upper bound for $K>0$.
In other words, under the assumption $\distortion_v(\phi^\tau)<C$,
the absolute value of the fiber-shearing index $k_v(\tau)$ 
is bounded by some $K=K(C,\phi)>0$.

\subsubsection*{Case 2}
The vertex $v$ is a semi, so the Seifert fibered piece $J_v$ 
has a non-orientable base $2$--orbifold. 

In this case, let $\tilde{J}_v\to J_v$ be the $2$--fold covering
which corresponds to the centralizer of ordinary fiber,
as appeared 
in the definition of vertex distortion (Definition \ref{vertexDistortion}). 
Then $\partial\tilde{J}_v$ 
is a trivial $2$--fold covering space of $\partial J_v$, 
and every fiber shearing $\tau\in\Mod(\partial J_v)$ at $v$ 
of index $k_v(\tau)$ 
lifts to a unique $\tilde\tau\in\Mod(\partial\tilde{J}_v)$ of index $2k_v(\tau)$. 
Then we can reduce to the previous case,
and bound $|2k_v(\tau)|$ by some $K=K(C,\phi)>0$,
since $\tilde{J}_v$ is Seifert fibered over an orientable $2$--orbifold.
\end{proof}

We are ready to summarize the proof of Proposition \ref{finiteGluings}.

\begin{proof}[Proof of Proposition \ref{finiteGluings}] By Lemma \ref{upToFiberShearings}, there are at most finitely many
 allowable types of gluings up to fiber shearings. 
By Lemma \ref{boundingFiberShearings}, for each allowable family of fiber shearings
 $\{\phi^\tau\}$,
where $\tau\in\Mod(\partial\pieces)$ runs over all the fiber shearings 
and where $\phi$ is a reference nondegenerate gluing, 
there are at most finitely many allowable indices of $\tau$
 at any Seifert fibered vertex. 
By Lemma \ref{sameIndex}, 
we conclude that
under any given bound of the primary distortion,
there are at most finitely many distinct nondegenerate gluings 
up to equivalence.
\end{proof}

\subsection{Distortion at atoroidal vertices --- a remark}\label{Subsec-atorVertexDistortion}
The distortions at the atoroidal vertices are not used for proving Proposition \ref{finiteGluings}.
The reason is explained by the following lemma.

\begin{lemma}\label{atorVertexDistortion} 
Let $(\Lambda,\pieces)$ be a preglue graph of geometrics, and
$v\in\vrtx(\Lambda)$ be a vertex of valence $n_v$
which corresponds to an atoroidal piece $J_v\subset\pieces$. 
Then for any gluing
$\phi\in\Phi(\Lambda,\pieces)$,
the following comparison holds
 for some constant $C>0$ which depending only on the topology of $J_v$:
$$\distortion_v(\phi)\leq C\cdot\left(\prod_{\delta\in\widetilde\edge(v)}\distortion_{e(\delta)}(\phi)\right)^{\frac{2}{n_v}}.$$
Here $\widetilde\edge(v)$ stands for the edge ends adjacent to $v$, 
and $e(\delta)$ stands for the edge that owns the end $\delta$.
\end{lemma}

\begin{proof} For simplicity we rewrite $J_v$ as $J$, and $n_v$ as $n$. 
Rewrite the submodule $\partial_*H_2(J,\partial J;\ZZ)$ of $H_1(\partial J;\ZZ)$ as $W$, 
and the subspace $\partial_*H_2(J,\partial J;\RR)$ of $H_1(\partial J;\RR)$ as $W_\RR$.
By definition we have $\qf_\phi\geq\qf_J$,
both positive definite on $H_1(\partial J;\RR)$, (see Subsection \ref{Subsec-qfGluings}).
So the unit ball $B_\phi$ of $\qf_\phi$ is contained 
by the unit ball $B_J$ of $\qf_J$.
It suffices to show for some constant $C_0>0$ independent of $\phi$,
$$\Delta(W,\qf_\phi)\leq C_0\cdot\Delta(H_1(\partial J;\ZZ),\qf_\phi).$$
By choosing a basis of $H_1(\partial J;\ZZ)$ as
an orthonormal basis, 
we fix a reference inner product on $H_1(\partial J;\RR)$.
Denote by $\mu_{2n}$ the induced $2n$--dimensional volume measure on $H_1(\partial J;\RR)$,
and by $\mu_n$ the induced $n$--dimensional volume measures on $W_\RR$ and
on $W^\perp_\RR$. 
It suffices to show for some constant $C_1>0$ independent of $\phi$,
$$\mu_{2n}(B_\phi)\leq C_1\cdot\mu_{n}(W_\RR\cap B_\phi).$$
Observe
$$\mu_{2n}(B_\phi)=\frac{\omega_{2n}}{\omega_n^2}\cdot\mu_{n}(W_\RR\cap B_\phi)\cdot\mu_{n}(\bar{B}_\phi),$$
where $\omega_m$ stands the volume of an $m$--dimensional Euclidean unit ball, 
and where $\bar{B}_\phi$ stands for the image of $B_\phi$ in $W^\perp$ under orthogonal projection. 
Then the above inequality about $\mu_{2n}(B_\phi)$ 
follows immediately from the comparison
$$\mu_{n}(\bar{B}_\phi)\leq \mu_{n}(\bar{B}_J),$$
where $\bar{B}_J$ stands for the image $B_J$ in $W^\perp$
under orthogonal projection. 
In fact, the uniform constant $C_1>0$
can be taken to be $\omega_{2n}/\omega_n^2$ times $\mu_n(\bar{B}_J)$.
\end{proof}

\section{Domination onto non-geometric $3$--manifolds}\label{Sec-nonGeomCase}
In this section, we bound the primary distortion of gluings assuming domination.
This is the content of Proposition \ref{distortionDomination} below.
Theorem \ref{main-dominate} is an immediate consequence
of Propositions \ref{finiteGluings} and \ref{distortionDomination}, 
since equivalent gluings yield homeomorphic $3$--manifolds 
by Definition \ref{eqvGluing}.
We also summarize the proof of Theorem \ref{main-dominate}
at the end of this section.

\begin{proposition}\label{distortionDomination} 
Let $M$ be an orientable closed $3$--manifold.
Suppose that $N_\phi$ is an orientable closed irreducible $3$--manifold
obtained from a nondegenerate gluing $\phi\in\Phi(\Lambda,\pieces)$ 
of a preglue graph of geometrics $(\Lambda,\pieces)$. 
Then there exists some constant $C=C(M)>0$ with the following property:
If $M$ dominates $N_\phi$, then 
the primary distortion satisfies $\distortion_\Lambda(\phi)<C$.\end{proposition}

We prove Proposition \ref{distortionDomination} in the rest of this section. 
In fact, we show that it suffices to assume that the underlying graph
is loopless and entire (Subsection \ref{Subsec-reduction}).
Then we present the proof under that additional assumption.
This trick of reduction helps to simplify the notations in a number of places,
(Subsection \ref{Subsec-looplessGraphCase}).

\subsection{Reduction to loopless entire graphs}\label{Subsec-reduction} 
We say that a graph $\Lambda$ is \emph{loopless}
if it contains no loop edge. 
We say that a graph is \emph{entire} 
if it contains no semi edges or semi vertices.

\begin{lemma}\label{reductionLooplessGraph} 
The statement of Proposition \ref{distortionDomination} holds true
if it holds true
under the additional assumption that $\Lambda$ is loopless and entire.\end{lemma}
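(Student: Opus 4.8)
The plan is to reduce the general case to the loopless entire case by passing to a suitable finite cover of $N_\phi$, using Lemma \ref{graphCovering} to preserve the primary distortion and using the fact that nonzero-degree maps lift along finite covers in the target (after replacing $M$ by a finite cover, which does not affect the hypothesis that we have \emph{some} closed orientable $3$-manifold dominating the cover). Concretely, suppose $N_\phi$ comes from a preglue graph-of-geometrics $(\Lambda,\pieces)$ which is either not loopless or not entire. I would produce a finite cover $\tilde N \to N_\phi$ whose own geometric decomposition has underlying graph $\tilde\Lambda$ that is loopless and entire, and such that each geometric piece of $\tilde N$ covers its image piece in $N_\phi$ either homeomorphically or doubly via the ordinary-fiber centralizer — exactly the hypothesis of Lemma \ref{graphCovering}, so that $\distortion(\tilde N) = \distortion(N_\phi) = \distortion_\Lambda(\phi)$.

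The construction of $\tilde\Lambda$ has two parts. First, to eliminate semi-edges and semi-vertices: a semi-vertex corresponds to a Seifert piece over a non-orientable base-orbifold, and a semi-edge corresponds to a cutting Klein bottle; passing to the double cover dual to orientability of the relevant base-orbifolds (equivalently, to the ordinary-fiber centralizer at each semi-vertex, extended to a cover of all of $N_\phi$ via a homomorphism $\pi_1(N_\phi)\to\ZZ/2$) replaces each semi-vertex by an entire-vertex covering it doubly and each semi-edge by an honest edge, landing us in the entire case. Second, to eliminate loops: for a loop edge at a vertex $v$, one wants a finite cover in which the two ends of the loop are separated; this is the standard trick of finding a homomorphism from $\pi_1(\Lambda)$ (the free group underlying the graph, via $\pi_1(N_\phi)\twoheadrightarrow\pi_1(\Lambda)$) onto a finite group so that no loop lifts to a loop — for instance map the graph group onto a large cyclic or symmetric group detecting each loop generator nontrivially. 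Taking the corresponding cover of $N_\phi$, each piece $J_v$ is replaced by some number of disjoint homeomorphic copies (the cover is induced from a cover of the graph, so pieces are pulled back over the vertices they cover), so again the Lemma \ref{graphCovering} hypothesis holds.

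Then I would finish as follows: let $\tilde N \to N_\phi$ be the composite of these covers, with underlying graph $\tilde\Lambda$ now loopless and entire, and let $\tilde M \to M$ be the pullback cover, so $\tilde M$ is still a closed orientable $3$-manifold and the lifted map $\tilde f : \tilde M \to \tilde N$ has the same degree (up to sign) as $f$, hence $\tilde M$ dominates $\tilde N$. Strictly speaking Proposition \ref{distortionDomination} should be applied with the ambient closed orientable manifold being $\tilde M$ rather than $M$, which is harmless since the constant $C$ in the conclusion is allowed to depend on the preglue graph-of-geometrics realizing the target (and the covering construction of $(\tilde\Lambda,\tilde\pieces)$ from $(\Lambda,\pieces)$ is canonical once $N_\phi$ is fixed); applying the loopless-entire case of Proposition \ref{distortionDomination} to $\tilde f$ gives $\distortion_{\tilde\Lambda}(\tilde\phi) < \tilde C$ for the lifted gluing $\tilde\phi$, and by Lemma \ref{graphCovering} this equals $\distortion_\Lambda(\phi)$, so setting $C = \tilde C$ proves the general case.

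The main obstacle I anticipate is the bookkeeping needed to guarantee that the combined cover really does realize the hypotheses of Lemma \ref{graphCovering} simultaneously — i.e. that no piece gets covered in a worse-than-double or non-fiber-centralizer way, and that the loop-killing cover and the orientation-type cover can be chosen compatibly (one builds a single homomorphism $\pi_1(N_\phi)\to G$ to a finite group encoding both constraints). One must also check that the resulting $\tilde\Lambda$ is genuinely loopless, not merely that individual given loops were killed: since there are finitely many edges this is a finite condition and can be arranged, but it requires choosing the finite quotient large enough, which is the one place where a little care is warranted.
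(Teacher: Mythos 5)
Your proposal takes essentially the same approach as the paper — pass to a finite cover $\tilde N\to N_\phi$ whose graph is loopless and entire, use Lemma~\ref{graphCovering} to preserve the primary distortion, pull back to a cover $\tilde M\to M$, and invoke the assumed loopless--entire case for $\tilde f:\tilde M\to\tilde N$. Your plan is correct in outline, but the paper's construction of the cover is both simpler and does some quantitative work that your write-up leaves implicit, and there is one genuine gap at the end.

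The paper builds $\tilde N$ by a fixed two-step doubling: first double along a maximal family of cutting Klein bottles to make the graph entire, then cut along all loop tori and double again to make it loopless. This always gives a connected cover of index at most $4$, \emph{independently of $N$}. Consequently the pullback $\tilde M$ is a connected cover of $M$ of index at most $4$; since $\pi_1(M)$ is finitely generated there are only finitely many such covers, and the paper takes $C$ to be the maximum of the constants $c(\tilde M)$ over this finite list. Your construction — mapping $\pi_1(\Lambda)$ onto a ``large'' cyclic or symmetric group that detects each loop, combined with an orientation double cover — also works, but the cover degree a priori depends on $\Lambda$; you should either observe that this degree is still bounded in terms of $M$ (via the Kneser--Haken bound on the number of JSJ tori) or, better, just use the paper's uniform index-$4$ cover.

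The genuine gap is at the very end: you write ``setting $C=\tilde C$ proves the general case,'' applying the loopless--entire case once to a single lifted map $\tilde f:\tilde M\to\tilde N$. But Proposition~\ref{distortionDomination} quantifies over \emph{all} maps $M\to N_\phi$ of nonzero degree, and the pullback cover $\tilde M$ depends on the chosen map $f$ (different $f$'s can give non-isomorphic pullbacks). So you cannot set $C$ equal to the constant for a single $\tilde M$; you need to take the maximum of $c(\tilde M)$ as $\tilde M$ ranges over all the finitely many covers of $M$ of the relevant bounded degree. This is exactly what the paper does. Without this step the constant is not shown to be uniform over all dominating maps, which is what the statement requires. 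A minor additional point: the fiber product $M\times_{N_\phi}\tilde N$ need not be connected, so one must pass to a connected component on which the lifted map still has nonzero degree (which exists since the degrees over the components sum to the degree of $f$); the paper takes this for granted by simply saying ``a (connected) covering $\tilde M$ of $M$,'' and you should do the same explicitly.
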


\begin{proof} 
The idea is that $\Lambda$ admits a covering graph $\tilde\Lambda$
(in the orbi-space sense) which has index at most $4$ 
and which is loopless and entire.
To be precise, 
suppose that $f\colon M\to N_\phi$ is a map of nonzero degree.
We rewrite $N_\phi$ as $N$ for simplicity. 

Cut $N$ along a maximal disjoint union of incompressible Klein-bottles,
and take two copies $X_0,X_1$ of the resulting compact $3$--manifold.
Glue each component of $\partial X_0$ 
to a unique component of $\partial X_1$ according 
to the gluing pattern of $N$. 
Then we obtain a $2$--fold covering space $\tilde{N}'$ of $N$.
The geometric decomposition of $\tilde{N}'$
gives rise to an entire graph $\tilde\Lambda'$, (possibly disconnected
if $\Lambda$ is already entire). 
Cut $\tilde{N}'$ along the tori which correspond the loop
edges of $\tilde\Lambda'$, 
and glue up two copies of the resulting
compact $3$--manifold, 
according to the gluing pattern of $\tilde{N}'$.
Then we obtain a $2$--fold covering space $\tilde{N}''$ of $\tilde{N}'$.
The graph for $\tilde\Lambda''$ is loopless and entire,
(possibly disconnected if $\tilde\Lambda'$ is already loopless).
Choose a connected component $\tilde{N}$ of $\tilde{N}''$.
Then $\tilde{N}$ covers $N$ of index
at most $4$, 
and it has a loopless entire graph $\tilde\Lambda$.
By Lemma \ref{graphCovering},
we have $\distortion(\tilde{N})=\distortion(N)$.
However, 
$\tilde{N}$ is dominated by a (connected) covering
$\tilde{M}$ of $M$ with index at most $4$, 
so $\distortion(\tilde{N})$ is at most some constant $c(\tilde{M})>0$, 
guaranteed by the assumption. 
Note that there are only finitely many such $\tilde{M}$ up to isomorphism,
because $\pi_1(M)$ is finitely generated. 
Take $C>0$ to be the maximum for all the constants
$c(\tilde{M})$ and for all the coverings $\tilde{M}$ of $M$ 
of index at most $4$.
Then we have $\distortion_\Lambda(\phi)=\distortion(N)<C$,
since $\phi\in\Phi(\Lambda,\pieces)$ is a nondegenerate gluing.
\end{proof}

\subsection{The loopless entire graph case}\label{Subsec-looplessGraphCase}
We prove Proposition \ref{distortionDomination} 
for loopless entire graphs:

\begin{proposition}\label{looplessGraphCase} 
Proposition \ref{distortionDomination} holds true under 
the additional assumption that $\Lambda$ is loopless and entire.\end{proposition}

We prove Proposition \ref{looplessGraphCase} in the rest of this subsection. 
In fact, 
we show that under the assumption of Proposition \ref{looplessGraphCase},
$\distortion_v(\phi)<C$ holds for any vertex $v\in\vrtx(\Lambda)$, 
where the constant $C>0$ 
can be chosen depending only on the triangulation number $t(M)$ of $M$.
(The \emph{triangulation number} of a closed $3$--manifold refers to
the smallest possible number of $3$--simplices in a simplicial decomposition of $M$.)
Similarly, $\distortion_e(\phi)<C$ holds for any edge $e\in\edge(\Lambda)$. 
The strategy is similar to \cite[Section 3]{AL}.
It is actually possible to bound the distortions
using the presentation length of $\pi_1(M)$ 
as introduced in \cite[Definition 3.1]{AL}.
However, to establish a factorization theorem like \cite[Theorems 3.2 and 3.6]{AL}
would seem unnecessarily involved for the goal of the present paper.
We appeal to the Poincar\'{e}--Lefschetz duality for a substitute.

\subsubsection{Partial geometrization of the target manifold}\label{Subsubsec-partialGeometrization}
Let $(\Lambda,\pieces)$ be a preglue graph of geometrics whose graph $\Lambda$ is loopless and entire,
and $\phi\in\Phi(\Lambda,\pieces)$ a nondegenerate gluing.
Suppose that $\Lambda$ is nontrivial,
then the glued-up $3$--manifold $N_\phi$ has a nontrivial geometric decomposition 
precisely as described by $(\Lambda,\pieces)$.
We would like to furnish $N_\phi$ 
with certain auxiliary Riemannian metric that is closely related
to the geometrization of the pieces,
so as to make effective estimates.

Given any $(\Lambda,\pieces)$ and $\phi$ as above, 
we simply rewrite $N_\phi$ by $N$.
Denote by
$$\tori=\bigsqcup_{e\in\edge(\Lambda)}T_e\subset N,$$
the union of cutting tori of $N$ in its geometric decomposition,
and by
$$\mathcal{U}=\bigsqcup_{e\in\edge(\Lambda)}\mathcal{U}_e\subset N,$$ 
a compact collar neighborhood of $\tori$.
 %(see Figure \ref{figNhdW}).
The neighborhood boundary $\partial\mathcal{U}$ 
can be naturally identified
(up to isotopy) as the disjoint union of tori 
$T_\delta$, 
for all $\delta\in\widetilde\edge(\Lambda)$.
The complement in $N$ of the interior 
of $\mathcal{U}$ 
can be naturally identified with the disjoint union of 
the geometric pieces $\pieces$. 
We fix such an identification 
for the rest of this subsection, 
so we can write
$$N=\pieces\cup_{\partial\mathcal{U}}\mathcal{U}.$$

%To speak of area, we need some appropriate geometry.
%To instruct further arrangements for $f$, we need geometry. 
%Denote by $\epsilon_3>0$ be the Margulis constant of $\Hyp$,
%so every value $0<\epsilon<\epsilon_3$ is a proper Margulis
%number of $\Hyp$ (hence also of $\mathbb{H}^2$). 
%For any value $0<\epsilon<\epsilon_3$,
For any proper Margulis number $\epsilon>0$ for $\Hyp$ (hence also for $\mathbb{H}^2$),
we say that a Riemannian metric $\rho$ on $N$
is \emph{$\epsilon$--partially geometric}
if the following conditions are satisfied
for all vertices $v\in\vrtx(\Lambda)$:
%we consider any Riemannian metric $\rho_\epsilon$ on $N$
%with the following properties:
\begin{itemize}
\item If the corresponding piece $J_v$ is atoroidal,
$(J_v,\rho)$ is isometric to the corresponding
complete hyperbolic $3$--manifold $J^\geo_v$ 
with the open $\epsilon$--thin horocusps removed.
\item If the corresponding piece $J_v$ is Seifert fibered,
$(J_v,\rho)$ is isometric to a corresponding complete $\HypEuc$--geometric $3$--manifold $J^\geo_v$ 
with the open horizontally $\epsilon$--thin horocusps
removed.
\end{itemize} 
Here a \emph{horizontally $\epsilon$--thin horocusp} refers to the preimage
in $J^\geo_v$ of an $\epsilon$--thin horocusp of the base $2$--orbifold
$\mathcal{O}^\geo$.
Note that the conditions determine $\rho$ uniquely on 
any atoroidal $J_v$ because of the Mostow rigidity,
or uniquely up to rescaling along the fiber direction
on any Seifert fibered $J_v$.
We impose no restriction to $\rho$ on the rest part $\mathcal{U}$.

When working with partially geometric Riemannian metrics,
we are supposed to focus on the hyperbolic geometry of the atoroidal pieces
and the horizontal hyperbolic geometry of the Seifert fibered pieces.
In particular, we introduce the following special notion of area,
which counts only the part of interest to us.
%%
%%by \emph{horizontal} we mean with respect to the pseudo-metric pulled back from the
%%metric on the hyperbolic base-orbifold, so for instance, a horizontal-$\epsilon$-thin
%%horocusp means the preimage in $J^\geo_v$ of a 
%%$\epsilon$-thin horocusp in $\mathcal{O}^\geo$.
%With respect to any Riemannian metric on $N$,
%area makes sense for any piecewise linearly immersed simplicial $2$--complex, 
%or any of its simplicial $2$--chain.
%However, as we introduce a partially geometric metric,
%we mean to be interested in only the useful portion,
%that is, the sum of the area in the atoroidal pieces
%and the horizontal area in the Seifert fibered pieces.
%This motivates the following special notion of area.
Let $j\colon K\to N$ be any piecewise smooth immersion
of a simplicial $2$--complex $K$ into $N$.
Suppose that $N$ is endowed with some 
Riemannian metric $\rho$ that is $\epsilon$--partially geometric.
For each hyperbolic piece $J_v$, there is an induced area measure
on $j^{-1}(J_v)\subset K$, which pulls back the area measure on $J_v$.
For each $\HypEuc$--geometric piece
$J_v$, there is an induced area measure on $j^{-1}(J_v)\subset K$,
which pulls back the horizontal area measure on $J_v$,
or in other words,
which pulls back the area measure on the hyperbolic base $2$--orbifold.
(Intuitively it measures the area after projecting onto the base
$2$--orbifold; see \cite[Subsection 3.3]{AL} for details.)
Then we define the \emph{effective area} of $(K,j)$
to be sum of the area measures of 
$j^{-1}(J_v)\subset K$ for all $v\in\vrtx(\Lambda)$,
denoted as $\Area^{\mathtt{eff}}_\rho(K,j)\geq0$.
Below we actually adopt a simpler notation
$$\Area(j(K))\geq0$$
and refer to $\Area(j(K))$ simply as the \emph{area} of $j(K)$,
since no other notions of area are used in this paper.
The area of a simplicial $2$--chain of $K$ is the sum
of the areas of the $2$--simplices weighted by 
the absolute values of their coefficients. 

Suppose that $\epsilon^*>0$ is a proper Margulis number for $\Hyp$.
For any $\epsilon$--partially geometric Riemannian metric on $N$ 
with $0<\epsilon<\epsilon^*$,
we introduce some geometrically defined collar neighborhoods
 $\mathcal{W}_e$ of $\mathcal{U}_e$ and 
$\mathcal{W}_v$ of $J_v$ as follows.
(Presumably $\epsilon*\approx \epsilon_3$ 
the Margulis constant of $\Hyp$ and $\epsilon\approx 0$.)
%Remember $\epsilon_3>0$ denote the Margulis constant of $\Hyp$. 
For each edge $e\in\edge(\Lambda)$, denote by 
$$\mathcal{W}_e=\mathcal{W}_e(\epsilon^*)\subset N$$ 
the union of $\mathcal{U}_e$ and the compact
$\epsilon^*$--thin (or horizontally $\epsilon^*$--thin)
horocusp neighborhoods of its adjacent pieces.
%where $\epsilon_3>0$ stands for the Margulis constant of $\Hyp^3$.
(There are two distinct adjacent pieces as $\Lambda$ is loopless and entire.)
%(Depending on $e$ is entire or semi, there could be either two or one such horocusps).
%Possibly after an arbitrarily small shrinking of $\mathcal{W}_e$,
%we may assume that the union of all the $\mathcal{W}_e$
%forms a compact regular neighborhood of $\tori$, 
%and properly contains $\mathcal{U}$.
%We may also assume that $f^{-1}(\partial\mathcal{W}_e)$ intersects 
%$M^{(2)}$ in general positions, 
%namely, that any $2$-simplex of $M^{(2)}$ is transversal 
%to $\partial\mathcal{W}_e$ under $f$. 
As $\Lambda$ is loopless, each $\mathcal{W}_e$ deformation-retracts to $T_e$,
so there is a quadratic form on the subspace $\partial_*H_2(\mathcal{W}_e,\partial\mathcal{W}_e;\RR)$
of $H_1(\partial\mathcal{W}_e;\RR)$, naturally induced from $\qf_\phi$ on
$H_1(T_\delta;\RR)\oplus H_1(T_{\bar\delta};\RR)$, 
where $\delta,\bar\delta$ stand for the two ends of $e$.
For each vertex $v\in\vrtx(\Lambda)$, denote by
$$\mathcal{W}_v=\mathcal{W}_v(\epsilon^*)\subset N$$
the union of $J_v$ and all the $\mathcal{W}_e$ where $e$ runs over the edges adjacent to $v$. 
As $\Lambda$ is loopless, each $\mathcal{W}_v$
deformation-retracts onto $J_v$,
so there is a quadratic form on the subspace
$\partial_*H_2(\mathcal{W}_v,\partial\mathcal{W}_v;
\RR)$ of $H_1(\partial\mathcal{W}_v;\RR)$, naturally induced from $\qf_\phi$ on
$H_1(\partial J_v;\RR)$.
See Figure \ref{figNhdW} for an illustration.

\begin{figure}[htb]
\centering
\def\svgwidth{\columnwidth}
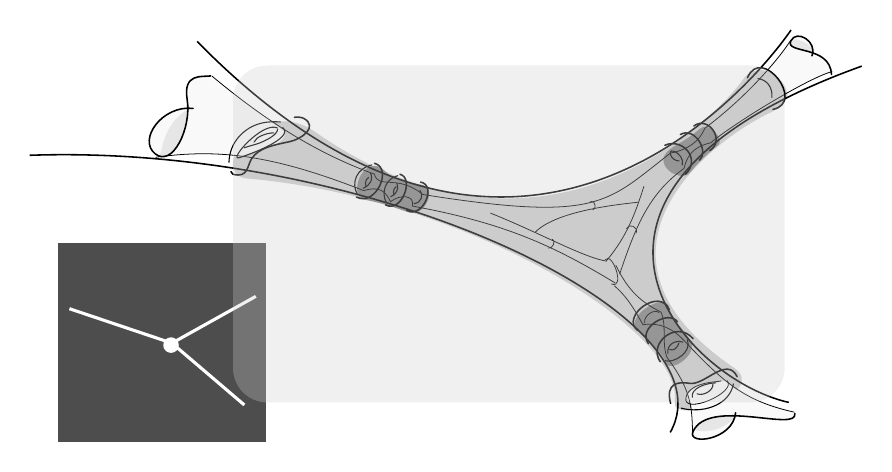
\caption{A cartoon of $N$ near a geometric piece with $3$ adjacent tori}\label{figNhdW}
\end{figure}

The following isoperimetric comparison
serves as the junction point between the geometric aspect and 
the quadratic forms associated to gluings:

\begin{lemma}\label{isoperimetric} 
Let $\epsilon^*>0$ be a proper Margulis number for $\Hyp$.
Suppose that $N$ is furnished with an $\epsilon$--partially geometric Riemannian metric
with $0<\epsilon<\epsilon^*$.
For any vertex $v\in\vrtx(\Lambda)$, 
suppose that $j\colon (S,\partial S)\to (\mathcal{W}_v,\partial\mathcal{W}_v)$ 
is a piecewise smooth proper immersion of an oriented compact surface.
Then the following estimate holds true:
$$\Area(j(S))\geq 4\cdot\left(\sinh\left(\frac{\epsilon^*}2\right)-\sinh\left(\frac{\epsilon}2\right)\right)\cdot\sqrt{\qf_\phi\left(j_*[\partial S]\right)},$$
where $j_*[\partial S]$ lies in $\partial_*H_2(\mathcal{W}_v,\partial\mathcal{W}_v;\ZZ)$. 
The same statement holds true for any edge $e\in\edge(\Lambda)$ in place of $v$.\end{lemma}

\begin{remark}\label{isoperimetricRemark} 
By hyperbolic geometry,
$4\sinh(\epsilon/2)$ is the Euclidean length of the shortest geodesic 
on the boundary of a hyperbolic horocusp 
whose pointwise injectivity radii are at most $\epsilon$
(maximized on the boundary).
Note that
the statement of Lemma \ref{isoperimetric} also makes sense when $\epsilon^*$ equals the Margulis constant $\epsilon_3$ for $\Hyp$,
and it holds true since we can take $\epsilon^*$ approaching $\epsilon_3$ from below.
The right-hand side of the inequality can be strengthened to be 
$\left(1-4\sinh(\epsilon/2)\right)\cdot\sqrt{\qf_\phi(j_*[\partial S])}$.
In fact, if we consider mutually disjoint maximal horocusps,
instead of the Margulis horocusps used in $\mathcal{W}_e$,
%in the definition of $\mathcal{W}_e$.
the length of the shortest geodesic on $\partial{W}_e$
will be replaced by $1$, (see Adams \cite{Ad}). 
This nice improvement is pointed out to the author by I. Agol.
 %for pointing out this interesting improvement, although we shall only use 
%the weaker inequality above to avoid technicalities.
\end{remark}

\begin{proof} 
We only prove the vertex case. The edge case is completely similar.
Let $v\in\vrtx(\Lambda)$ be a vertex. 
Denote by $\edge(v)$ the edges adjacent to $v$.  
As $\Lambda$ is loopless and entire,
$e\in\edge(v)$ has two ends, which we denote as $\widetilde\edge(e)=\{\delta,\bar\delta\}$.
The corresponding components
of $\mathcal{W}_e\setminus\mathring{\mathcal{U}}_e$ 
are denoted as $\mathcal{W}_\delta,\mathcal{W}_{\bar\delta}$.
Suppose 
$$j_*[\partial S]=\sum_{e\in\edge(v)}\alpha_e,$$
according to the direct-sum decomposition
$$H_1(\partial\mathcal{W}_v;\RR)\cong\bigoplus_{e\in\edge(v)}\,H_1(T_e;\RR).$$
By a well-known calibration argument,
the (effective) area of 
$j(S)\cap\mathcal{W}_\delta$ is at least 
$4(\sinh(\epsilon^*/2)-\sinh(\epsilon/2))\cdot\sqrt{\qf_{J_\delta}(\alpha_e)}$,
for any $\delta\in\widetilde\edge(e)$ and any $e\in\edge(v)$.
Here $\delta$ stands for the end of $e$ pointing out from $v$,
and $J_\delta\subset\pieces$ stands for the corresponding adjacent piece of $J_v$.
(See Subsection \ref{Subsec-qfGluings} for the definition of $\qf_{J_\delta}$). 
We estimate
\begin{eqnarray*}
\Area(j(S))&\geq&\sum_{e\in\edge(v)}\sum_{\delta\in\widetilde\edge(e)}4\cdot\left(\sinh\left(\frac{\epsilon^*}2\right)-\sinh\left(\frac{\epsilon}2\right)\right)\cdot\sqrt{\qf_{J_\delta}(\alpha_e)}\\
&\geq&4\cdot\left(\sinh\left(\frac{\epsilon^*}2\right)-\sinh\left(\frac{\epsilon}2\right)\right)\cdot\sqrt{\sum_{e\in\edge(v)}\sum_{\delta\in\widetilde\edge(e)}\qf_{J_\delta}(\alpha_e)}\\
&=&4\cdot\left(\sinh\left(\frac{\epsilon^*}2\right)-\sinh\left(\frac{\epsilon}2\right)\right)\cdot\sqrt{\sum_{e\in\edge(v)}\qf_\phi(\alpha_e)}\\
&=&4\cdot\left(\sinh\left(\frac{\epsilon^*}2\right)-\sinh\left(\frac{\epsilon}2\right)\right)\cdot\sqrt{\qf_\phi(j_*[\partial S])},
\end{eqnarray*}
which is as desired.
\end{proof}

We observe another elementary estimate of linear algebra to be used later.

\begin{lemma}\label{largeGenerator} 
Let $\epsilon^*>0$ be a proper Margulis number for $\Hyp$.
Suppose that $N$ is furnished with an $\epsilon$--partially geometric Riemannian metric
with $0<\epsilon<\epsilon^*$.
For any vertex $v\in\vrtx(\Lambda)$, 
suppose that $\alpha_1,\cdots,\alpha_m$ are
elements of $\partial_* H_2(\mathcal{W}_v,\partial\mathcal{W}_v;\ZZ)$ 
which span $\partial_* H_2(\mathcal{W}_v,\partial\mathcal{W}_v;\RR)$ over $\RR$.
Then the following inequality holds for some $\alpha_k$, $1\leq k\leq m$:
$$\sqrt{\qf_\phi(\alpha_k)}\geq \distortion_v(\phi).$$
The same statement holds true for any edge $e\in\edge(\Lambda)$ in place of $v$.
\end{lemma}

\begin{proof} This follows from a Minkowski-type inequality for lattices. 
Without loss of generality, we assume that $m$ is minimal, 
so $m$ equals to the valence of $v$. 
Consider the volume of the parallelogram
spanned by the elements $\alpha_i$ with respect to 
the inner product induced by $\qf_\phi$, on $\partial_* H_2(\mathcal{W}_v,
\partial\mathcal{W}_v;\RR)$.
Then it follows
$$\prod_{i=1}^{n_v}\sqrt{\qf_\phi(\alpha_i)}\geq 
\left|\det(\alpha_1,\cdots,\alpha_{n_v})\right|\cdot\sqrt{\Delta(\partial_* H_2(\mathcal{W}_v,
\partial\mathcal{W}_v;\ZZ),\qf_\phi)},$$
where $\det(\alpha_1,\cdots,\alpha_{n_v})$ is the determinant
of $(\alpha_1,\cdots,\alpha_{n_v})$, regarded as a square matrix over any basis of 
$\partial_* H_2(\mathcal{W}_v,\partial\mathcal{W}_v;\ZZ)$.
Note that $|\det(\alpha_1,\cdots,\alpha_{n_v})|\geq1$
because the determinant is a nonzero integer.
By the definition of vertex distortion,
we estimate
$$\prod_{i=1}^{n_v}\sqrt{\qf_\phi(\alpha_i)}
\geq\sqrt{\Delta(\partial_* H_2(\mathcal{W}_v,
\partial\mathcal{W}_v;\ZZ),\qf_\phi)}=\distortion_v(\phi)^{n_v},$$
so the asserted inquality follows immediately.

The edge case is completely similar.
\end{proof}

\subsubsection{Efficient normal positioning of the $2$--skeleton}
Given any map $f\colon M\to N$,
(presumably of nonzero degree,)
we would like to modify $f$ by homotopy 
so that it sits in nice position with respect
to a simplicial decomposition of $M$ and 
the geometric decomposition of $N$.
More specifically, we would like the $2$--skeleton of $M$ under $f$ to be 
piecewise smoothly immersed, 
and in certain normal position with respect to $\mathcal{U}$,
and of certain effectively bounded area 
according to the geometrization of $\pieces$.
These points are made precise in Lemma \ref{pullStraight},
but we need some \textit{ad hoc} terms to state it.
%For reasons that become apparent in the proof of Lemma \ref{pullStraight},
%the homotopy modification procedure is called \emph{pulling straight}.

%Suppose that $M$ is an orientable closed $3$--manifold.
%Fix a decomposition of $M$ into a simplicial $3$--complex.
%Denote by $M^{(i)}$ the $i$--skeleton of $M$ for $i=0,1,2,3$.
In hyperbolic plane or $3$--space if we intersect a geodesic $2$--simplex generically with
a compact convex domain with smooth boundary, 
the resulting region will be a disk 
with only finitely many possible combinatorial boundary configurations.
This geometric picture inspires us with the following topological notion.
A \emph{handle} of a $2$--simplex is defined 
to be an embedded (compact) disk that intersects each edge
either in a compact interval or in the empty set.
For example, if a handle contains no vertex of the $2$--simplex,
it is either an \emph{isolated disk} (a disk in the open $2$--simplex bounded by a simple closed curve),
or a \emph{half disk} (a bigonal disk bounded by a properly embedded arc 
and a compact interval in an open edge),
or a \emph{normal band} (a rectangular disk bounded by two parallel properly embedded arcs
and two compact intervals in distinct open edges),
or a \emph{monkey handle} 
(a hexagonal disk bounded by three mutually nonparallel properly embedded arcs
and three compact intervals in mutually distinct open edges).
There are another four possible configurations if the handle contains vertices,
which we sometimes refer to as \emph{cornered handles}.

A \emph{handle complex} in a simplicial $2$--complex refers to 
a union of handles in the $2$--simplices 
which are mutually disjoint in any $2$--simplex and which
fit exactly along the $1$--skeleton.
The latter condition means that the intersections of an edge
with any two handles are either disjoint or the same.
A handle complex in a simplicial $2$--complex is said to be \emph{complete}
if every $2$--simplex intersects the handle complex 
in a (possibly empty) union of mutually disjoint handles.
The \emph{frontier} of a complete handle complex hence refers to the union
of all the boundary simple closed curves 
and all the boundary arcs of the handles 
which are not compact intervals of edges.

%A \emph{normal arc} of a $2$--simplex refers to a properly embedded arc
%whose endpoints lie in a pair of distinct open edges.
%%a \emph{normal band} of a $2$--simplex 
%%refers to a compact collar neighborhood of a normal arc
%%which does not contain the vertex.
%There are exactly three parallel families of normal arcs for any $2$--simplex.
%A \emph{$0$--handle} of a $2$--simplex refers to either 
%an embedded compact $2$--cell
%in the $2$--simplex interior or an embedded bigonal $2$--cell
%which meets the $2$--simplex boundary only in a compact subarc of an open edge;
%a \emph{$1$--handle} of a $2$--simplex refers to 
%an embedded rectangular $2$--cell that is a collar neighborhood of a properly embedded arc
%whose endpoints lie in (possibly the same) open edges;
%a \emph{monkey handle} of a $2$--simplex refers to
%an embedded hexagonal $2$--cell 
%that is confined by three subarcs of the edges
%and three normal arcs of the $2$--simplex.

\begin{lemma}\label{pullStraight} 
Adopt the assumptions and notations for $N$ as in Section \ref{Subsubsec-partialGeometrization}.
Let $M$ be an orientable closed $3$--manifold furnished with a simplicial decomposition.
Let $\beta>0$ be an arbitrary constant and $\epsilon^*>0$ be a proper Margulis number for $\Hyp$.
Then for any (continuous) map $f\colon M\to N$,
there exists an $\epsilon$--partially geometric Riemannian metric on $N$
for some constant $0<\epsilon<\epsilon^*$,
and $f$ can be modified by homotopy to satisfy all the following conditions:
\begin{itemize}
\item 
For any vertex $v\in\vrtx(\Lambda)$,
the intersections $M^{(2)}\cap f^{-1}(\mathcal{W}_v)$
is a complete handle complex in the $2$--skeleton $M^{(2)}$
with frontier $M^{(2)}\cap f^{-1}(\partial \mathcal{W}_v)$.
The same statement holds true for any edge $e\in\edge(\Lambda)$.
\item
The following estimate about the effective area of the $2$--skeleton holds:
$$\Area\left(f\left(M^{(2)}\right)\right)\,<\,\#\left\{\textrm{2-simplices of }M^{(2)}\right\}\times\pi+\beta.$$
\end{itemize}
\end{lemma}

The reader may think of $\beta>0$ to be very small, so the area upper bound is roughly $\pi$
times the number of $2$--simplices in the $2$--skeleton of $M$.
Intuitively, we first want to arrange the original $f$ by homotopy, so that it intersects
$\mathcal{U}$ in certain minimal generic position, 
then we want to pull $f$ on the complement of $f^{-1}(\mathcal{U})$,
so that it becomes somehow straight in the geometric pieces.
The proof below actually follow this idea,
but we present with sufficient details for the reader's reference.

\begin{proof}
Since $N$ is an Eilenberg--MacLane space, 
any map $f'|_{M^{(2)}}\colon M^{(2)}\to N$ 
that is homotopic to the restriction of $f$ to 
the $2$--skeleton $M^{(2)}$
can be extended to be a map $f'\colon M\to N$,
and any such extension $f'$ is homotopic to $f$.
Therefore, it suffices to modify the restricted map
$f|_{M^{(2)}}\colon M^{(2)}\to N$ by homotopy to satisfy the asserted conditions.
Below we rewrite $f|_{M^{(2)}}$ as $f_0$ for simplicity.

We first modify $f_0$ by homotopy
to obtain a piecewise smooth map $f_1\colon M^{(2)}\to N$,
such that $f_1^{-1}(\mathcal{U})$ is a complete handle complex in $M^{(2)}$ formed by normal bands,
and that $f_1^{-1}(\partial\mathcal{U})$ is the frontier of $f_1^{-1}(\mathcal{U})$.
This follows from a familiar argument in normal surface theory, which we sketch as follows.
Possibly after a small perturbation of $f_0$, 
we may assume it to be piecewise smoothness and transverse to $\tori$.
Take a homotopy modification $f'_0$ of $f_0$ which minimizes the cardinality of $f_0^{-1}(\tori)\cap M^{(1)}$.
Then $(f'_0)^{-1}(\tori)$ must only intersect any $2$--simplex of $M^{(2)}$ 
in a possibly empty union of mutually disjoint properly embedded normal arcs and simple closed curves.
Here a normal arc in a $2$--simplex refers to a properly embedded arc
whose endpoints lie in a pair of distinct open edges.
In fact, any non-normal properly embedded arc would bound
a disk in the $2$--simplex with some subarc of an edge,
so its pair endpoints would be removable from $(f'_0)^{-1}(\tori)\cap M^{(1)}$ by Whitney's disk trick,
which is impossible due to the minimality assumption on $f'_0$.
We modify $f'_0\colon M^{(2)}\to N$ furthermore on the interior of $2$--simplices,
so that the resulting map $f''_0\colon M^{(2)}\to N$ has the property that
$(f''_0)^{-1}(\tori)$ contains no simple closed curves in open $2$--simplices.
Note that $f''_0$ is automatically homotopic to $f'_0$, because $N$ is an Eilenberg--MacLane space.
To obtain $f''_0$, 
the modification can be done by induction 
starting from any inner most simple closed curve of $(f'_0)^{-1}(\tori)$ contained in an open $2$--simplex,
using the incompressiblity of the components of $\tori$ in $N$.
In the end, we obtain a map $f''_0\colon M^{(2)}\to N$ homotopic to $f_0$,
piecewise smooth, transverse to $\tori$, and 
such that $(f''_0)^{-1}(\tori)$ is a union of normal arcs in $2$--simplices.
Possibly after composing $f''_0$ with an isotopically trivial homeomorphism of $N$,
we obtain a homotopy modification $f_1$ of $f_0$ as claimed.

Denote by $M^{(2)}_{\mathcal{U}}$ the complete handle complex $f_1^{-1}(\mathcal{U})$ in $M^{(2)}$
formed by normal bands.
Let $\epsilon^*>0$ be any given proper Margulis number for $\Hyp$.
Suppose that $N$ is endowed with an $\epsilon$--partially geometric Riemannian metric.
At this point, we only require $0<\epsilon<\epsilon^*$.
We describe an explicit recipe 
to modify $f_1$ on the complement of $M^{(2)}_{\mathcal{U}}$.
The resulting map $f_2\colon M^{(2)}\to N$
is homotopic to $f_1$ relative to $M^{(2)}_{\mathcal{U}}$.
It satisfies the asserted conditions of Lemma \ref{pullStraight}
when $\epsilon$ is sufficiently small.

The recipe for constructing $f_2$ is as follows. 
Denote by $M^{(2)}_{J_v}$ the complete handle complex $f_1^{-1}(J_v)$ in $M^{(2)}$,
for any vertex $v$.
Denote by $M^{(2)}_{T_\delta}$ the union of frontier components $f_1^{-1}(T_\delta)$
of $M^{(2)}_{J_v}$, for any end of edge $\delta$ adjacent to $v$.
For convenience of the description, 
we fix a map $\phi_v\colon J^{\geo}_v\to N$ that sends $J^{\geo}_v$ homeomorphically 
onto the component of $N\setminus\tori$ which contains $J_v$,
such that $(\phi_v)^{-1}(J_v)$ is sent isometrically onto $J_v$.
Via $\phi_v$ we may identify the restriction of $f_1$ to $M^{(2)}_{J_v}$
as a map $\tilde{f}_{1,v}\colon M^{(2)}_{J_v}\to J^{\geo}_v$.
So for any boundary torus $T_\delta$ of $J_v$,
the map $\tilde{f}_{1,v}$ sends $M^{(2)}_{T_\delta}$ to
the $\epsilon$--horocusp boundary $\phi^{-1}_v(T_\delta)$ of $J^{\geo}_v$.
For any handle $R$ of $M^{(2)}_{J_v}$, we take a subdivision of $R$
into bigon regions and triangular regions as follows.
Since $R$ cannot contain isolated disks, 
$\partial R$ meets the frontier of $M^{(2)}_{J_v}$
in at most three mutually disjoint arcs (possibly none),
we call these the \emph{frontier edges} of $R$.
We call the compact intervals that appear in $\partial R\cap M^{(1)}$ 
the \emph{simplicial edges} of $R$,
and call the vertices that appear in $\partial R\cap M^{(0)}$
the \emph{simplicial vertices} of $R$.
For each frontier edge of $R$,
we construct a properly embedded arc in $R$ which 
bounds a bigon region with that frontier edge.
We further construct diagonal arcs of $R$ to divide
the rest part of all the bigonal regions in $R$
into triangular regions.
If $J_v$ is atoroidal, we modify $\tilde{f}_{1,v}$ on $R$
by homotopy relative to the frontier edges,
so that the resulting map $\tilde{f}'_{1,v}|_R\colon R\to J^{\geo}_v$
sends any simplicial vertex of $R$ to the $\epsilon^*$--thick part of $J^{\geo}_v$,
and sends any simplicial edge of $R$ and the constructed arcs of $R$
to geodesic segments in $J^{\geo}_v$, 
and sends the bigonal or triangular regions to $J^{\geo}_v$
minimizing the area subject to the above requirements.
In this way, the triangular regions are necessarily locally geodesically immersed,
while the bigonal regions are contained in the horocusps bounded by $\phi_v^{-1}(\partial J_v)$.
%having area bounded by twice the length of the frontier arcs in $J^{\geo}_v$, 
%due to the isoperimetric inequality.
If $J_v$ is Seifert fibered, the construction for the modification 
$\tilde{f}'_{1,v}$ on $R$ is similar as the atoroidal case. 
The triangular regions are locally geodesically immersed
after projecting to the hyperbolic base $2$--orbifolds.
The bigonal regions are again contained in the horocusps bounded by $\phi_v^{-1}(\partial J_v)$.
To finish the construction,
we observe that the modified maps $\tilde{f}'_{1,v}|_R$ for all $R$ 
fit naturally, so we obtain a map $\tilde{f}'_{1,v}\colon M^{(2)}_{J_v}\to J^{\geo}_v$.
We define $f_2\colon M^{(2)}\to N$ by $\phi_v\circ\tilde{f}'_{1,v}$ on any $M^{(2)}_{J_v}$,
and by $f_1$ on $M^{(2)}_{\mathcal{U}}$.

It is clear from the construction that $f_2$ is homotopic to $f_0$,
and indeed, homotopic to $f_1$ relative to $M^{(2)}_{\mathcal{U}}$.
With respect to $f_2$, every $2$--simplex of $M^{(2)}$ is divided
into the normal bands of $M^{(2)}_{\mathcal{U}}$, 
and the bigonal regions attached to the normal bands,
and the triangular regions attached to the bigonal regions,
and exactly one extra triangular region,
which is confined by the constructed diagonal arcs
or compact intervals of the edges.

For any given constant $\beta>0$, 
the total effective area of the triangular regions attached to bigonal regions
can be bounded by $\beta$ if $\epsilon$ is sufficiently small.
This is because there are only finitely many such triangular regions, 
and moreover,
each of them has an edge whose length or horizontal length is small
with respect to the geometry of $J^\geo_v$ when $\epsilon$ is close to $0$.
It follows that the total effective area of these triangular regions
is small if $\epsilon$ is small.
The only extra triangular region has effective area at most $\pi$
by hyperbolic geometry.
The bigonal regions and the normal bands contributes nothing to the effective area
since they are contained in $f_2^{-1}(\mathcal{U})$.
Therefore the asserted area estimate holds for $f_2$ and for any sufficiently small $\epsilon$.

If the proper Margulis number $\epsilon^*$ is given generic,
the corresponding parts $\mathcal{W}_v$ and $\mathcal{W}_e$
are complete handle complexes in $M^{(2)}$.
Here being generic means that $\partial\mathcal{W}_v$ and $\partial\mathcal{W}_e$
are transverse to the triangular regions.
The key observation here is 
that the triangular regions are totally geodesically or horizontal-geodesically immersed
with respect to the geometry of $J^\geo_v$.
Since the $\epsilon$--horocusps are convex,
it follows that 
any $\mathcal{W}_v$ or $\mathcal{W}_e$ must intersect the triangular regions
in handles of those regions,
when $\epsilon^*$ is generic.
The intersection of $\mathcal{W}_v$ or $\mathcal{W}_e$
with any $2$--simplex of $M^{(2)}$
is the union of some normal bands and their attached bigonal regions,
together with some handles of the triangular regions as above.
So a direct case-by-case argument shows that $\mathcal{W}_v$ or $\mathcal{W}_e$
intersects any $2$--simplex of $M^{(2)}$ in either a handle or an empty set,
and hence $\mathcal{W}_v$ and $\mathcal{W}_e$ are complete handle complexes
in $M^{(2)}$, when $\epsilon^*$ is generic.

Even if the given proper Margulis number $\epsilon^*$ is non-generic,
we may still perturbe $f_2$ slightly without affecting the area estimate,
then the argument for the generic case works.
Therefore, the asserted complete handle complex structure of $\mathcal{W}_v$ and $\mathcal{W}_e$
holds for $f_2$ possibly after some small perturbation.

We conclude that for any constant $\beta>0$ and proper Margulis constant $\epsilon^*>0$,
there exists
an $\epsilon$--partially geometric Riemannian metric on $N$ for some $0<\epsilon<\epsilon^*$,
and moreover, 
some homotopy modification of $f_0$ satisfies the asserted properties.
In fact, the homotopy modification can be chosen as a small perturbation of the above $f_2$,
and the properties hold for any sufficiently small $\epsilon$.
We also point out that, as indicated by the construction,
an upper bound for the sufficient smallness of $\epsilon$ 
depends on the simplicial decomposition of $M$ and 
the homotopy class of $f|_{M^{(2)}}\colon M^{(2)}\to N$,
besides the constants $\beta$ and $\epsilon^*$.
\end{proof}

If $f\colon M\to N$ is a map that satisfies the listed conditions of Lemma \ref{pullStraight},
we adopt the following notations.
For any vertex $v\in\vrtx(\Lambda)$, 
denote by  $M^{(2)}_{\mathcal{W}_v}$
the complete handle complex $f^{-1}(\mathcal{W}_v)\cap M^{(2)}$
in $M^{(2)}$,
and by 
$M^{(2)}_{\partial\mathcal{W}_v}$
the frontier $f^{-1}(\partial\mathcal{W}_v)\cap M^{(2)}$ of $M^{(2)}_{\mathcal{W}_v}$.
For any edge $e\in\vrtx(\Lambda)$, 
denote by $M^{(2)}_{\partial\mathcal{W}_e}$ and by $M^{(2)}_{\partial\mathcal{W}_e}$
similarly.

The following bounded spanning lemma is similar to \cite[Lemma 3.4]{AL}.
It supplies as crucial an ingredient for our situation.
We show that the second real relative homologies
of $M^{(2)}_{\mathcal{W}_v}$ and $M^{(2)}_{\mathcal{W}_e}$
are all spanned by finite subsets of uniformly bounded area:

\begin{lemma}\label{generatingSet} 
Let $\beta>0$ be a constant and $\epsilon^*>0$ be a proper Margulis number for $\Hyp$.
With the assumptions and notations of Lemma \ref{pullStraight},
suppose that $N$ is endowed with some $\epsilon$--partially geometric Riemannian metric
and that $f\colon M\to N$ satisfies the listed conditions of Lemma \ref{pullStraight}.
%Suppose that $f$ has been homotoped to a minimal general position
%with respect to $\mathcal{U}$, and has been pulled straight,
%satisfying the conclusion of Lemma \ref{pullStraight}.
Denote by $t$ the number of $3$--simplices in the simplicial decomposition of $M$.
%and by $A(n)$ the function $27^n(9n+4)(n\pi+\beta)$.

Then there exists some constant $A(\beta,t)>0$ 
with the following properties:
\begin{itemize}
\item For any vertex $v\in\vrtx(\Lambda)$, 
the relative homology 
$H_2(M^{(2)}_{\mathcal{W}_v}, M^{(2)}_{\partial\mathcal{W}_v};\RR)$
is spanned over $\RR$ by some finite subset
represented by some relative $\ZZ$--cycles 
of area individually bounded by $A(\beta,t)$.
\item
The same statement holds true for any edge $e\in\edge(\Lambda)$ 
in place of $v$.
\end{itemize}
\end{lemma}

\begin{remark}\label{A_expression}
The constant $A(\beta,t)$ can be taken explicitly as $3^{4t+2}(2t+1)(2t\pi+\beta)$,
as indicated by our proof below.
This bound is slightly better than 
$3^{6t}(36t^2+8t)\pi$, which is given in \cite[Lemma 3.4]{AL},
but apparently both of the bounds are far from optimal.
\end{remark}

The proof of \cite[Lemma 3.4]{AL} actually works for our current situation 
after some adaptation. 
However, we provide a self-contained proof with slightly different exposition,
for the reader's reference.

\begin{proof}
We only argue for the vertex case. The edge case is completely similar.

Note that $M^{(2)}_{\mathcal{W}_v}$ is a complete handle complex in $M^{(2)}$
with frontier $M^{(2)}_{\partial\mathcal{W}_v}$. 
We regard the handle complex structure 
as a cell $2$--complex decomposition of $M^{(2)}$,
where the open $2$--cells are precisely the interior of the handles.
The frontier $M^{(2)}_{\partial\mathcal{W}_v}$ is a cell $1$--subcomplex
and the rest $1$--cells are the components of $M^{(2)}_{\mathcal{W}_v}\cap M^{(1)}$.
For each dimension $i=0,1,2$, denote by
$\mathcal{C}_i(M^{(2)}_{\mathcal{W}_v},M^{(2)}_{\partial\mathcal{W}_v})$,
$\mathcal{Z}_i(M^{(2)}_{\mathcal{W}_v},M^{(2)}_{\partial\mathcal{W}_v})$,
and $\mathcal{B}_i(M^{(2)}_{\mathcal{W}_v},M^{(2)}_{\partial\mathcal{W}_v})$
the $\ZZ$--module of relative cellular $i$--chains, $i$--cycles, and $i$--boundaries,
respectively.
We observe the following natural isomorphism of $\RR$--modules:
$$H_2\left(M^{(2)}_{\mathcal{W}_v},M^{(2)}_{\partial\mathcal{W}_v};\RR\right)
\cong\mathcal{Z}_2\left(M^{(2)}_{\mathcal{W}_v},M^{(2)}_{\partial\mathcal{W}_v}\right)\otimes\RR.$$

Fix an orientation for each cell.
Then $\mathcal{C}_i(M^{(2)}_{\mathcal{W}_v},M^{(2)}_{\partial\mathcal{W}_v})$
is freely generated over the standard unordered basis,
which consists of all the $i$--cells of $M^{(2)}_{\mathcal{W}_v}$ 
that are not contained in $M^{(2)}_{\mathcal{W}_v}$.
Denote by $\|\cdot\|_\infty$ the standard $\ell^\infty$--norm norm on
$\mathcal{C}_i(M^{(2)}_{\mathcal{W}_v},M^{(2)}_{\partial\mathcal{W}_v})\otimes\RR$.
(For any relative cellular $i$--chain $c$,
%$\in\mathcal{C}_i(M^{(2)}_{\mathcal{W}_v},M^{(2)}_{\partial\mathcal{W}_v})\otimes\RR$,
the norm $\|c\|_\infty\geq0$ is defined to be the maximum absolute value of the coefficients
as $c$ is a unique $\RR$--linear combination over the standard basis.)
Because of Lemma \ref{pullStraight},
the following area estimate holds for any relative cellular $2$--chain
$c\in\mathcal{C}_2(M^{(2)}_{\mathcal{W}_v},M^{(2)}_{\partial\mathcal{W}_v})\otimes\RR$:
$$\Area(c)\leq \|c\|_\infty\times\Area\left(M^{(2)}_{\mathcal{W}_v}\right)< 
\|c\|_\infty\times\Area\left(M^{(2)}\right)<\|c\|_\infty\times (2t\pi+\beta).$$
Note that any simplicial closed $3$--manifold of $t$ $3$--simplices has $2t$ $2$--simplices.

It follows that the statement of Lemma \ref{generatingSet}
is implied by the following purely combinatorial claim:
\begin{itemize}
\item If $X$ is a complete handle complex in a finite simplicial $2$--complex $K$ of $l$ $2$--simplices,
then the finitely generated free $\RR$--module of relative cellular $2$--cycles 
$\mathcal{Z}_2(X,\mathrm{fr}(X))\otimes\RR$ is spanned over $\RR$
by all the $\ZZ$--elements $z\in\mathcal{Z}_2(X,\mathrm{fr}(X))$ 
that satisfy $\|z\|_\infty\leq 3^{2l+2}(l+1)$.
Here $\mathrm{fr}(X)$ stands for the frontier of $X$ in $K$.
\end{itemize}

To prove the combinatorial claim, 
we may assuming that $X$ is connected, 
otherwise working separately with its connected components.
We may also assume that $X$ contains no cornered handles
by the following trick of reduction.
Denote by $\mathcal{N}$ a compact star neighborhood of
the $0$--skeleton $K^{(0)}$, small enough so that
$\mathcal{N}$ only intersects any cornered handle of $X$ in a compact star neighborhood 
of the simplicial vertices bounded by normal arcs.
Then $\hat{X}=X\setminus\mathring{\mathcal{N}}$ is a complete handle complex with frontier
$\mathrm{fr}(\hat{X})=\mathrm{fr}(X)\cup\partial\mathcal{N}$ in $K$,
and there are no cornered handles in $\hat{X}$.
(Here $\mathring{\mathcal{N}}$ stands for the pointset interior of $\mathcal{N}$, which is an open star,
and $\partial{\mathcal{N}}$ stands for the pointset boundary, which is a link.)
The natural isomorphism of $\RR$--modules
$\mathcal{Z}_2(\hat{X},\mathrm{fr}(\hat{X}))\otimes\RR\cong 
H_2(\hat{X},\mathrm{fr}(\hat{X});\RR)\cong H_2(X,\mathrm{fr}(X)\cup \mathcal{N};\RR)\cong
H_2(X,\mathrm{fr}(X);\RR)\cong \mathcal{Z}_2(X,\mathrm{fr}(X))\otimes\RR$
preserves the norm $\|\cdot\|_\infty$ on both ends.
Therefore, 
if the above claim holds for complete handle complexes with no cornered handles,
such as $\hat{X}$, it also holds for any complete handle complex $X$ in general.

Suppose that $X$ is connected and contains no cornered handles.
%Then any handle of $X$ is either an isolated disk, or a half disk, or a normal band,
%or a monkey handle. 
In this case, the proof of the claim is essentially the content of \cite[Lemma 3.4]{AL}.
The argument below follows that same idea with some slight refinement of the estimates.
%Below we present the proof, following the same idea but refining the estimates slightly,
%for the reader's reference.

Take a cell subcomplex $Y$ of $X$  
which is a union of normal bands of $X$ and non-frontier $1$--cells of $X$.
We require $Y$ to be maximal subject to 
the condition that every connected component of $Y$ is contractible.
Then $Y$ is a product cell complex
of a $1$--cell closure and a simplicial forest (possibly with isolated vertices).
Let $X'$ be the quotient of $X$ obtained by collapsing every component of $Y$ to a $1$--cell,
(via the projection onto $1$--cell factor with respect to the product structure).
Denote by $\mathrm{fr}'(X')$ the quotient of $\mathrm{fr}(X)$ in $X'$,
so $(X',\mathrm{fr}'(X'))$ is a cell complex pair with the quotient cell structure.
%Fix an ordering of the standard bases of $\mathcal{C}_i(X',\mathrm{fr}'(X'))$.
It follows by definition
$$\mathcal{Z}_2\left(X',\mathrm{fr}'(X')\right)\otimes\RR=\mathrm{ker}\left(
\mathcal{C}_2(X',\mathrm{fr}'(X'))\otimes\RR\stackrel{\partial'_2}{\longrightarrow}\mathcal{C}_1(X',\mathrm{fr}'(X'))\otimes\RR\right),$$
where $\partial'_2$ stands for the relative boundary operator of cellular chains.
%is represented with respect to the basis as a matrix $B'$ of entries in $\ZZ$.
%(We consider $B'$ as acting by left multiplication on column vectors.)
%so $B'$ has $b_0(Y)$ rows.
%The kernel of $B'$ is by definition $\mathcal{Z}_2(X',\partial X')\otimes\RR$.
Moreover, the collapsing map induces an isomorphism of $\RR$--linear spaces
$$\psi\colon\mathcal{Z}_2(X,\mathrm{fr}(X))\otimes\RR\to \mathcal{Z}_2(X',\mathrm{fr}'(X'))\otimes\RR.$$

For any $2$--cycle $z'\in \mathcal{Z}_2(X',\mathrm{fr}'(X'))$,
the $2$--cycle $\psi^{-1}(z')\in \mathcal{Z}_2(X,\mathrm{fr}(X))$
can be described as follows.
Let $c'\in\mathcal{C}_2(X,\mathrm{fr}(X))$ be the unique $2$--chain obtained by lifting every $2$--cell
of $X'$ to $X$. Then $\partial_2 c'$ lies in $\mathcal{C}_1(X,\mathrm{fr}(X))$,
and $-\partial_2 c'$ is a boundary of a unique $2$--chain $c''\in\mathcal{C}_2(X,\mathrm{fr}(X))$
formed by $2$--cells of $Y$. Hence the $2$--cycle $\psi^{-1}(z')$ equals $c'+c''$.

Denote by $m$ the number of monkey handles in $X$. We observe $0\leq m\leq l$.
We also observe that the number of components $b_0(Y)$ of $Y$
is at most $2m+1$, since $X$ is connected.
(Adding any extra half disk or normal band of $X$ to $Y$ will result in 
a cell complex of $b_0$ unchanged,
while adding any monkey handle may cause decrease of $b_0$ by at most $2$.)
Fix an ordering of the standard basis of $\mathcal{C}_i(X',\partial X')$.
Then the operator $\partial_2'$ is represented as a matrix of entries in $\ZZ$.
(We consider the matrix as acting by left multiplication on column vectors.)
By construction, 
%there are at most $m$ column vectors of $\partial_2'$ 
%which have $\ell^1$--norm $3$, 
%and the other column vectors have $\ell^1$--norm at most $2$.
the column vectors of $\partial_2'$ have $\ell^1$--norm at most $3$.
Here the $\ell^1$--norm refers to the sum of entry absolute values,
which agrees with the $\ell^1$--norm of $\mathcal{C}_1(X',\partial X')\otimes\RR$
with respect to the standard basis.
The number of rows for $\partial_2'$ is equal to $b_0(Y)$.
In general, for any $p\times q$ matrix $B$ of rank $r$ over $\RR$,
it is an elementary exercise of linear algebra to show that
the kernel of $B$ is spanned over $\RR$ by all the vectors in $\RR^q$ of the form
$$\sum_{t=0}^r (-1)^t\mathrm{det}\left(B\left[j_1,\cdots,j_r;i_1,\cdots,\widehat{i_t},\cdots,i_r\right]\right) \vec{e}_{i_t},$$
where the row indices $1\leq j_1\leq \cdots\leq j_r\leq p$ and the column indices $1\leq i_0<\cdots<i_r\leq q$
range over all possible choices.
Here $B[j_1,\cdots,j_r;i_1,\cdots,\widehat{i_t},\cdots,i_r]$ stands for the $r\times r$--block of $B$
indicated by the rows and columns,
and $\vec{e}_1,\cdots,\vec{e}_q\in\RR^q$ stand for the standard basis vectors.
Therefore, the kernel of $\partial_2'$ is spanned over $\RR$ 
by some $\ZZ$--elements 
$z'_1,\cdots,z'_s\in\mathcal{Z}_2(X',\mathrm{fr}'(X'))$
of $\ell^1$--norm bounded by 
%$3^m\times 2^{\mathrm{rank}(\partial_2')-m}\times (\mathrm{rank}(\partial_2')+1)\leq 3^m2^{m+1}(2m+2)$,
$3^{\mathrm{rank}(\partial_2')}\times (\mathrm{rank}(\partial_2')+1)\leq 3^{2m+1}(2m+2)$,
using the fact $\mathrm{rank}(\partial_2')\leq b_0(Y)\leq 2m+1$.
Moreover, for every $z'\in\mathcal{Z}_2(X',\mathrm{fr}'(X'))$,
the above description of $\psi^{-1}(z')=c'+c''$
implies $\|\partial_2 c'\|_1\leq 3\times \|z'\|_1$ 
(because any handle of $X$ is attached to at most $3$ non-frontier $1$--cells)
and $\|c''\|_\infty\leq(1/2)\times\|\partial_2 c'\|_1$ 
(because $Y$ is a product of a $1$--cell closure with a simplicial forest).
It follows that $\|\psi^{-1}(z')\|_\infty= \max\{\|c'\|_\infty,\|c''\|_\infty\}
\leq (3/2)\times\|z'\|_1$.
Combining the above estimates,
we see that $\mathcal{Z}_2(X,\mathrm{fr}(X))\otimes\RR$
admits a spanning set of $\ZZ$--elements
$\psi^{-1}(z'_1),\cdots,\psi^{-1}(z'_s)$
of $\ell^\infty$--norm bounded by 
$(3/2)\times 3^{2m+1}(2m+2)=3^{2m+2}(m+1)\leq 3^{2l+2}(l+1)$.
This completes the proof of the combinatorial claim,
so Lemma \ref{generatingSet} follows.
\end{proof}

\subsubsection{Local effects of domination near vertices and edges}
We study the local homological behavior of domination 
over the geometrically defined regions $\mathcal{W}_e$ and $\mathcal{W}_v$. 
%For any vertex $v\in\vrtx(\Lambda)$, 
%we adopt the notations $M^{(2)}_{\mathcal{W}_v}=f^{-1}(\mathcal{W}_v)\cap M^{(2)}$,
%and $M^{(2)}_{\partial\mathcal{W}_v}=f^{-1}(\partial\mathcal{W}_v)\cap M^{(2)}$. 
%For any edge $e\in\vrtx(\Lambda)$, 
%we adopt the notations $M^{(2)}_{\partial\mathcal{W}_e}$ and $M^{(2)}_{\partial\mathcal{W}_e}$
%similarly.

\begin{lemma}\label{dominationOnto}
Let $\beta>0$ be a constant and $\epsilon^*>0$ be a proper Margulis number for $\Hyp$.
With the assumptions and notations of Lemma \ref{pullStraight},
suppose that $N$ is endowed with some $\epsilon$--partially geometric Riemannian metric
and that $f\colon M\to N$ satisfies the listed conditions of Lemma \ref{pullStraight}.
If $f$ has nonzero degree, then the induced homomorphism
$$f|_*: H_2\left(M^{(2)}_{\mathcal{W}_v},M^{(2)}_{\partial\mathcal{W}_v};\RR\right)
\to H_2\left(\mathcal{W}_v,\partial\mathcal{W}_v;\RR\right)$$ 
is surjective for any vertex $v\in\vrtx(\Lambda)$.
The same statement holds true for any edge $e\in\edge(\Lambda)$ in place of $v$.
\end{lemma}

\begin{proof} 
This follows essentially from the Poincar\'{e}--Lefschetz duality.
We only prove the vertex case. The edge case is similar.

Given any vertex $v\in\vrtx(\Lambda)$,
observe that $f|_*$ equals the composition of the homomorphisms
$$\begin{CD}
	H_2\left(M^{(2)}_{\mathcal{W}_v},M^{(2)}_{\partial\mathcal{W}_v};\RR\right)@.@.\\
	@V\cong VV@.@.\\
	H_2\left(M^{(2)},M^{(2)}_{N\setminus\mathrm{int}({\mathcal{W}}_v});\RR\right)	@>i_* >>
	H_2\left(M,M_{N\setminus\mathring{\mathcal{W}}_v};\RR\right) @>\bar{f}_* >>
	H_2\left(N,N\setminus\mathring{\mathcal{W}}_v;\RR\right)\\
	@.@.@V\cong VV\\
	@.@. H_2\left(\mathcal{W}_v,\partial\mathcal{W}_v;\RR\right)
\end{CD}$$
where the vertical isomorphisms are homology excisions,
and where $i_*$ is induced by inclusion of the $2$--skeleton.

We show that $i_*$ can be made surjective 
by modifying $f$ only on the interior of the $3$--simplices of $M$.
Note that such modification preserves the homotopy class of $f$,
since $N$ is an Eilenberg--MacLane space.

To this end, consider any $3$--simplex $B$ of $M$ in the given simplicial decomposition.
The sphere $\partial B$ is a union of (possibly empty) embedded regions 
$\partial B_{\mathcal{W}_v}$ and $\partial B_{N\setminus\mathring{\mathcal{W}}_v}$,
meeting along a union of mutually disjoint 
simple closed curves $\partial B_{\partial\mathcal{W}_v}$,
%(because of our general-position assumption on $\partial\mathcal{W}_v$).
(because it is the intersection of $\partial B$ with the frontier $M^{(2)}_{\partial\mathcal{W}_v}$ of 
the complete handle complex $M^{(2)}_{\mathcal{W}_v}$).
The curves $\partial B_{\partial\mathcal{W}_v}$ 
bound some mutually disjoint properly embedded disks in $B$.
We choose a union of such disks,
and modify the map $f$ in the interior of $B$. 
We require the disks to be mapped to $\partial\mathcal{W}_v$,
and the complementary components in $B$ of the disk union
to be mapped to either $\mathcal{W}_v$ or $N\setminus\mathring{\mathcal{W}}_v$.
This can be done because $\partial\mathcal{W}_v$,
$\mathcal{W}_v$, and $N\setminus\mathring{\mathcal{W}}_v$ are all Eilenberg--MacLane spaces
and because $\partial\mathcal{W}_v$
is incompressible in $N$.

By performing the modification of $f$ on the interior of every $3$--simplex,
we may require henceforth that
$M$ is obtained from
$M^{(2)}\cup M_{N\setminus\mathring{\mathcal{W}}_v}$
by attaching $3$--cells. 
(According to the above construction,
the $3$--cells are precisely
 those disk complements which are mapped to $\mathcal{W}_v$.)
Then the inclusion of pairs induces an epimorphism
$H_2(M^{(2)}\cup M_{N\setminus\mathring{\mathcal{W}}_v},
M_{N\setminus\mathring{\mathcal{W}}_v};\RR)
\to H_2(M,M_{N\setminus\mathring{\mathcal{W}}_v};\RR)$.
Observe also the excision isomorphism
$H_2(M^{(2)},M^{(2)}_{N\setminus\mathring{\mathcal{W}}_v};\RR)
\cong H_2(M^{(2)}\cup M_{N\setminus\mathring{\mathcal{W}}_v},
M_{N\setminus\mathring{\mathcal{W}}_v};\RR)$.
By composing these homomorphisms, we see that $i_*$ is surjective
for the modified $f$.

It remains to show
the surjectivity of $\bar{f}_*$ in the above decomposition of $f|_*$.
To this end, observe that the mapping degree of $f$ is unchanged under the modification,
since it is homotopy invariant.
The nonzero degree assumption for $f$ implies
the injectivity of the induced homomorphism $\bar{f}^*$ 
on the third $\RR$--coefficient relative cohomology,
by the commutative diagram of homomorphisms
$$\begin{CD}
	H^3\left(N,N\setminus\mathring{\mathcal{W}}_v;\RR\right)	@>\bar{f}^*>>	H^3\left(M,M_{N\setminus\mathring{\mathcal{W}}_v};\RR\right)\\
	@VVV@VVV\\
	H^3(N;\RR)	@>f^*>>	H^3(M;\RR).
\end{CD}$$
We see that the induced homomorphism
$$\bar{f}^*:  
H^*\left(N,N\setminus\mathring{\mathcal{W}}_v;\RR\right)\to 
H^*\left(M,M_{N\setminus\mathring{\mathcal{W}}_v};\RR\right)$$
is injective for every dimension, 
because of the commutative diagram of homomorphisms
$$\begin{CD}
	H^i\left(N-N\setminus\mathring{\mathcal{W}}_v;\RR\right)	@.\times @.H^{3-i}\left(N,N\setminus\mathring{\mathcal{W}}_v;\RR\right)	@>\smile>>
								H^3\left(N,N\setminus\mathring{\mathcal{W}}_v;\RR\right)\\
 	@V \bar{f}^* VV @. @V \bar{f}^* VV @V \bar{f}^* VV\\
	H^i\left(M-M_{N\setminus\mathring{\mathcal{W}}_v};\RR\right)	@.\times	@. H^{3-i}\left(M,M_{N\setminus\mathring{\mathcal{W}}_v};\RR\right)
					@>\smile>> H^3\left(M,M_{N\setminus\mathring{\mathcal{W}}_v};\RR\right),\\
\end{CD}$$
 where the cup-product pairings are nonsingular and 
where the rightmost
vertical homomorphism is injective.
It follows that the homomorphism $\bar{f}_*$ 
is surjective for every dimension, and in particular, for dimension $2$.

The surjectivity of $i_*$ and $\bar{f}_*$ implies the surjectivity
of the homomorphism $f|_*$, as asserted.
\end{proof}

\subsubsection{Summary of proofs}
We complete the proofs of Propositions \ref{distortionDomination}, \ref{looplessGraphCase},
and Theorem \ref{main-dominate} as follows.
%We are now ready to prove Proposition \ref{looplessGraphCase}.

\begin{proof}[{Proof of Proposition \ref{looplessGraphCase}}] 
Suppose that $N$ is an orientable closed irreducible nongeometric $3$--manifold 
obtained from a nondegenerate gluing $\phi$ of a preglue graph of geometrics $(\Lambda,\pieces)$,
whose graph is loopless and entire, 
as assumed in Proposition \ref{looplessGraphCase}.
We adopt the notations of Section \ref{Subsubsec-partialGeometrization}.
Suppose that $M$ is an orientable closed $3$--manifold
and that $f\colon M\to N$ is a nonzero degree map.
Take a simplicial decomposition of $M$ with the fewest possible $3$--simplices,
so the number of $3$--simplices realizes the triangulation number $t=t(M)$.
We show that there exists some constant $C=C(t)>0$,
such that $\distortion_v(\phi)<C$ holds for any vertex $v\in\vrtx(\Lambda)$,
and that $\distortion_e(\phi)<C$ holds for any edge $e\in\edge(\Lambda)$.
We only argue for the vertex case. The edge case is completely similar.

Let $v\in\vrtx(\Lambda)$ be a vertex.
Fix some constant $\beta>0$ and some proper Margulis number $\epsilon^*>0$ for $\Hyp$.
(For example, take $\beta=1$ and $\epsilon^*$ smaller than the Margulis constant $\epsilon_3$ for $\Hyp$.)
With the notations and the assumptions of Lemma \ref{pullStraight},
we furnish $N$ with some $\epsilon$--partially geometric Riemannian metric,
and pull straight the nonzero degree map $f\colon M\to N$,
so that it satisfies the listed conditions of Lemma \ref{pullStraight}.
There is a spanning subset $[S_1],\cdots,[S_m]$
of the relative homology  $H_2(M^{(2)}_{\mathcal{W}_v}, M^{(2)}_{\partial\mathcal{W}_v};\RR)$,
represented by relative $\ZZ$--cycles of area individually bounded by 
$A(\beta,t)$, as guaranteed by Lemma \ref{generatingSet}.
In fact, the relative $\ZZ$--cycles can be regarded as 
proper immersions of compact oriented surfaces
$j_i\colon (S_i,\partial S_i)\to (\mathcal{W}_v,\partial\mathcal{W}_v)$,
for all $1\leq i\leq m$.
(This follows from the construction by gluing up handles.)
By Lemma \ref{isoperimetric}, we estimate, 
for all $1\leq i\leq m$:
\begin{eqnarray*}
\sqrt{\qf_\phi({j_{i*}}[\partial S_i])}&\leq&\frac14\,\left(\sinh\left(\frac{\epsilon^*}2\right)-\sinh\left(\frac{\epsilon}2\right)\right)^{-1}\cdot\Area(j_i(S_i))\\
&\leq& \frac14\cdot\left(\sinh\left(\frac{\epsilon^*}2\right)-\sinh\left(\frac{\epsilon}2\right)\right)^{-1}\cdot A(\beta,t).
\end{eqnarray*}
Note $j_{i*}[\partial S_i]=\partial_*j_{i*}[S_i]$
in $\partial_*H_2(\mathcal{W}_v,\partial\mathcal{W}_v;\ZZ)$.
Since $f$ has nonzero degree,
the elements $j_{1*}[\partial S_1],\cdots,j_{m*}[\partial S_m]$
form a spanning subset of 
$\partial_*H_2(\mathcal{W}_v,\partial\mathcal{W}_v;\RR)$,
by Lemma \ref{dominationOnto}.
Then we estimate, by Lemma \ref{largeGenerator}:
$$\distortion_v(\phi)
\leq \max_{1\leq i\leq m}\sqrt{\qf_\phi({j_{i*}}[\partial S_i])}
\leq \frac14\cdot\left(\sinh\left(\frac{\epsilon^*}2\right)-\sinh\left(\frac{\epsilon}2\right)\right)^{-1}\cdot A(\beta,t).$$
As $0<\epsilon<\epsilon^*$ can be arbitrarily small, 
we obtain an estimate as desired:
$$\distortion_v(\phi)\leq \frac{A(\beta,t)}{4\sinh\left(\frac{\epsilon^*}2\right)},$$
where the right-hand side depends only on $t$.
(In fact, one can obtain an improved estimate $\distortion_v(\phi)\leq A(\beta,t)$ 
by Remark \ref{isoperimetricRemark}.
Moreover,
if one uses the expression of $A(\beta,t)$ in Remark \ref{A_expression},
one may replace $\beta$ with $0$ since it can be taken arbitrarily small.)
\end{proof}

\begin{proof}[{Proof of Proposition \ref{distortionDomination}}]
Combine Lemma \ref{reductionLooplessGraph} and Proposition \ref{looplessGraphCase}.
\end{proof}

%We close this section by summarizing the proof of Theorem \ref{main-dominate}.

\begin{proof}[{Proof of Theorem \ref{main-dominate}}] 
To summarize, let $M$ be an orientable closed $3$--manifold.
By Corollary \ref{fixing_preglue_corollary},
we may assume a fixed nontrivial preglue graph of geometrics $(\Lambda,\pieces)$,
and 
it suffices to bound the equivalence classes of nondegenerate gluings $\phi\in\Phi(\Lambda,\pieces)$
for which $M$ dominates the glued-up nongeometric $3$--manifold $N_\phi$.
By Proposition \ref{distortionDomination}, 
the primary distortion $\distortion(N_\phi)=\distortion_\Lambda(\phi)$ is bounded 
by some constant determined by (the homeomorphism type of) $M$.
By Proposition \ref{finiteGluings}, 
there are at most finitely many equivalence classes of nondegenerate gluings
whose primary distortion is bounded by a given constant. 
Therefore, there are at most finitely many equivalence classes of nondegenerate gluings
in $\Phi(\Lambda,\pieces)$ and $\phi$ belongs to one of them.
This completes the proof of Theorem \ref{main-dominate}.\end{proof}

\section{Domination of bounded degree}\label{Sec-boundedDegree} 
In this section, we prove Corollary \ref{main-d-dominate}. 
%First observe the following simplification:

\begin{lemma}\label{reductionSF} 
The statement of Corollary \ref{main-d-dominate} holds true if it holds true
under the following assumption:
The target is a closed orientable Seifert fibered $3$--manifold 
over an orientable base $2$--orbifold 
and with nonvanishing Euler classes.\end{lemma}

\begin{proof} 
We can reduce the statement of Corollary \ref{main-d-dominate} to the case
when the target is irreducible, 
because any orientable closed $3$--manifold $1$--dominates any 
of its connected-sum factors in the Kneser--Milnor decomposition,
and because the number of connect-sum factors 
in the target is bounded in terms of the Kneser--Haken number of the source
(Lemma \ref{LemmaBRW}).
By Theorems \ref{main-dominate}, \ref{ThmBRW} and \ref{ThmBBW},
it remains to prove the statement assuming that the target
supports one of the geometries $\Sph$, $\Nil$ or $\Sft$. 
Such targets are precisely 
closed orientable Seifert fibered $3$--manifolds 
with nonvanishing Euler classes. 
To further reduce to the case when the base $2$--orbifold
is orientable, 
we pass to a $2$--fold covering space, getting rid of essentially
embedded Klein bottles,
following a similar argument as used in Lemma \ref{reductionLooplessGraph}.\end{proof}

%Now it suffices to show the following proposition.

\begin{proposition}\label{d-dominateSF} 
Given any nonzero integer $d$ up to sign, 
every orientable closed $3$--manifold $d$--dominates
at most finitely many Seifert fibered $3$--manifolds 
with orientable base $2$--orbifolds and with nontrivial Euler classes.\end{proposition}

Proposition \ref{d-dominateSF} is known when $d$ equals $1$, 
due to Hayat-Legrand--Wang--Zieschang \cite{HWZ} for the $\Sph$--geometric case,
and Wang--Zhou \cite{WZ} for the $\Nil$--geometric case and the $\Sft$--geometric case.
We generalize their arguments by 
establishing the following promoted lemma:

\begin{lemma}[{Compare \cite[Lemma 3]{HWZ} and \cite[Lemma 3 (1)]{WZ}}]\label{boundTor} 
For any nonzero integer $d$ up to sign, 
if an orientable closed $3$--manifold $M$ $d$--dominates an orientable closed $3$--manifold $N$, 
then the following comparison holds true:
$$|\Tor\,H_1(N;\ZZ)|\leq d\cdot|H_1(M;\ZZ/d\ZZ)|\cdot|\Tor\,H_1(M;\ZZ)|,$$
where $\Tor$ stands for the torsion submodule, 
and $|\cdot|$ stands for cardinality.\end{lemma}  

\begin{proof} 
%This follows from an easy algebraic topology argument. 
Suppose that $f\colon M\to N$ is a map of degree $d\neq0$
(after fixing orientations for $M$ and $N$).
The umkehr homomorphism:
$$f_!\colon H_*(N;\ZZ)\to H_*(M;\ZZ),$$
is defined by
$f_!(\alpha)= [M]\frown f^*(\check\alpha)$ for all $\alpha\in H_*(N;\ZZ)$, 
where $\check\alpha\in H^{3-*}(N;\ZZ)$
stands for the Poincar\'{e} dual of $\alpha$. 
It is straightforward to check that 
the composition $f_*\circ f_!\colon H_*(N;\ZZ)\to H_*(N;\ZZ)$ is 
the scalar multiplication by $d$.
In particular,
the submodule $d\cdot\Tor\,H_1(N;\ZZ)$ is surjected by $f_!(\Tor\,H_1(N;\ZZ))\leq \Tor\,H_1(M;\ZZ)$.
On the other hand, from the long exact sequence:
$$\cdots\longrightarrow H_1(N;\ZZ)\stackrel{d}\longrightarrow H_1(N;\ZZ)\longrightarrow H_1(N;\ZZ/d\ZZ)\longrightarrow 0,$$
we see $\Tor\,H_1(N;\ZZ)/(d\cdot\Tor\,H_1(N;\ZZ))\leq H_1(N;\ZZ/d\ZZ)$.
Since $f\colon M\to N$ has degree $d$,
the image of $H_1(M;\ZZ/d\ZZ)$ in $H_1(N;\ZZ/d\ZZ)$ has index at most $d$. 
This yields the asserted inequality.
\end{proof}

\begin{proof}[Proof of Proposition \ref{d-dominateSF}] 
The proof is the same as in \cite{HWZ,WZ},
only with an update with Lemma \ref{boundTor}.
For the reader's reference, 
we provide a brief outline as follows. 

We adopt the notation $N=\Sigma(g;b_0,b_1/a_1,\cdots,b_s/a_s)$
for an orientable closed Seifert fibered $3$--manifold, 
where $s,g\geq0$ and $b_0$ are integers, 
and where $0<b_i<a_i$ are coprime integers for $1\leq i\leq s$.
It means that $N$ has an orientable base $2$--orbifold $F_g(a_1,\cdots,a_s)$, 
namely, 
an orientable closed surface of genus $g$ and with cone-points of orders $a_i$.
The orbifold Euler characteristic of the base $2$--orbifold is
$$\chi=2-2g-\sum_{i=1}^s(1-\frac1{a_i}).$$
The Euler class of the Seifert fibration (as a rational number) is
$$e=-b_0-\sum_{i=1}^s\,\frac{b_i}{a_i}.$$ 
The torsion size of the first homology is
$$|\Tor\,H_1(N;\ZZ)|=|e|\cdot\prod_{i=1}^s a_i,$$
assuming $e\neq0$.

Suppose that $M$ is an orientable closed $3$--manifold which 
$d$--dominates some $N=\Sigma(g;b_0,b_1/a_1,\cdots,b_s/a_s)$ 
as above with $e\neq 0$. 
Following \cite{HWZ} and \cite{WZ}, we consider cases 
according to the sign of $\chi$.

When $\chi>0$ occurs, 
we have $g=0$ and $s\leq 3$. 
For $0\leq s\leq 2$, $N$ is a lens space (possibly a $3$--sphere), 
so there are only finitely many allowable homeomorphism types for $N$ by \cite[Corollary 1]{HWZ}.
(The argument uses the linking paring on $\Tor\,H_1(N;\ZZ)$.)
For $s=3$, $N$ must be a prism $3$--manifold
$\Sigma(0;b_0,1/2,1/2,b_3/a_3)$, 
or $\Sigma(g;b_0,b_2/3,b_3/3)$,
or $\Sigma(g;b_0,1/2,b_2/3,b_3/4)$, 
or $\Sigma(g;b_0,1/2,2/3,b_3/5)$. For the latter
three types, $b_2,b_3$ are automatically bounded by their denominators, 
so $b_0$ can be bounded with Lemma \ref{boundTor} 
and the above formulas. 
For the prism case, $N$ admits 
a $\ZZ/2\ZZ$--action whose quotient is a lens space $L$.
The finite cyclic group $\pi_1(L)$ has order
$|2a_3b_0+2b_3-2a_3|$ if $a_3$ is odd,
or $|a_3b_0+b_3-a_3|$ if $a_3$ is even.
Then $|a_3b_0+b_3-a_3|$ are bounded by the lens space case above,
since $M$ $2d$--dominates $L$. 
As the torsion-size comparison bounds $|a_3b_0+b_3+a_3|$, 
noticing $0<b_3<a_3$, 
we have upper bounds for $|b_0|,a_3,b_3$ when $N$ is prism.
Thus there are at most finitely many homeomorphically distinct $N$ with $\chi>0$. 

When $\chi=0$ occurs, 
there are only finitely many allowable values of $s,g$ and $a_1,\cdots,a_s$, 
by the formula of $\chi$. 
For each possibility, 
there are only finitely allowable values for $b_1,\cdots,b_s$, by $0<b_i<a_i$.
The value $b_0$ can be bounded with the torsion-size comparison. 
Thus there are at most finitely many homeomorphically distinct $N$ with $\chi=0$.

When $\chi<0$ occurs, 
we have $\chi\leq-1/42$,
equal if the base $2$--orbifold is a turnover $F_0(2,3,7)$. 
Using Seifert volume as introduced by Brooks--Goldman \cite{BG},
(denoting by $\mathrm{SV}$,)
we have $\mathrm{SV}(M)\geq d\cdot\mathrm{SV}(N)=|e|^{-1}\chi^2d$, (see \cite[Lemmas 3(2) and 4(3)]{WZ}). 
Then $|e|$ is bounded from zero in terms of $M$. 
By Lemma \ref{boundTor} and the above torsion-size formula, 
we can bound $s$ and $a_1,\cdots,a_s$. 
This in turn yields an upper bound for $|b_0|$,
again by the torsion-size comparison. 
Thus there are at most finitely
many homeomorphically distinct $N$ with $\chi<0$ as well. 

This completes the proof of Proposition \ref{d-dominateSF}.
\end{proof}

\begin{proof}[Proof of Corollary \ref{main-d-dominate}] Combine Lemma \ref{reductionSF} and Proposition \ref{d-dominateSF}.\end{proof}

\section{Conclusion}\label{Sec-Conclusion}
Both Theorem \ref{main-dominate} and Corollary \ref{main-d-dominate} 
are results about maps between $3$--manifolds. 
We propose some questions for further study.

\begin{question}\label{questionEpimorphism} Which groups surject 
at most finitely many isomorphically distinct fundamental groups of aspherical $3$--manifolds?\end{question}

Such groups are sometimes called \emph{tiny groups}.
For example, virtually solvable groups are tiny groups. 
Our techniques seem to imply that
finitely generated groups with vanishing first Betti number are also tiny.

Let $M$ be an orientable closed $3$--manifold. For any nonzero integer $d$ up to sign,
denote by $\tau_M(d)$ the maximal number of homeomorphically distinct $3$--manifolds 
which are dominated by $M$ with degree at most $d$. 
By Corollary \ref{main-d-dominate}, $\tau_M(d)$ is a positive finite integer.

\begin{question}\label{questionGrowth} 
Is it possible to decide $\tau_M$ for $|d|$ sufficiently large?\end{question}

An even harder problem is
to decide whether a given orientable closed $3$--manifold $M$
dominates (or $d$--dominates) another given $N$ \cite[Question 1.1]{Wa}. 
%Question \ref{questionGrowth} is a much weaker version of that problem.

\begin{question}\label{questionDistortion} Given an orientable closed nontrivial
graph manifold $N$, is there an explicit bound for the Seifert volume $\mathrm{SV}(N)$
in terms of its graph $\Lambda(N)$ and its average distortions $\distortion_v(N)$
and $\distortion_e(N)$?\end{question}

The question is motivated by the phenomenon that 
Seifert volume also reflects the complexity of gluings.
More generally, we wonder if there is a more insightful notion
of global distortion, other than the primary average distorion
we have introduced.

\bibliographystyle{amsalpha}
%\bibliography{../refs}

\end{document}